\documentclass{amsart}

\usepackage[english]{babel}
\usepackage{amsmath, amsfonts, amssymb, amsthm, faktor}

\usepackage[all]{xy}
\usepackage{tikz}
\usetikzlibrary{math}
\usetikzlibrary{patterns}
\usepackage{caption}
\usepackage{subfig}
\usepackage{float}

\usepackage{caption}
\usetikzlibrary{arrows,chains,matrix,positioning,scopes,decorations.pathreplacing,
decorations.pathmorphing,decorations.markings,arrows.meta}

\usepackage{aliascnt}
\usepackage[colorlinks, linktocpage, allcolors=black,breaklinks]{hyperref}

\usepackage{enumerate}

\usepackage{verbatim}
\usepackage[T1]{fontenc}
\usepackage{lmodern}
\usepackage{amsmath,amssymb,amsthm,mathtools}
\usepackage{booktabs,longtable,array,tabularx}
\usepackage{enumitem}
\usepackage{fancyvrb}
\usepackage{multicol}
\usepackage[margin=1in]{geometry}
\usepackage{microtype}
\allowdisplaybreaks

\newenvironment{appcerttable}{%
  \par\addvspace{0.55\baselineskip}%
  \noindent\begin{minipage}{\linewidth}%
  \centering\footnotesize%
  \captionsetup{font=footnotesize,skip=3pt}%
  \setlength{\tabcolsep}{2.0pt}%
}{%
  \end{minipage}\par\addvspace{0.55\baselineskip}%
}

\theoremstyle{definition}
\newtheorem{theorem}{Theorem}[section]

\newtheorem{proposition}[theorem]{Proposition}
\newtheorem{corollary}[theorem]{Corollary}
\newtheorem{lemma}[theorem]{Lemma}

\newtheorem*{remark}{Remark}

\newtheorem*{acknowledgments}{Acknowledgments}

\newcommand{\Tcqadpa}{T_{c^qa=ad^p}}

\newcommand{\Gcaca}{G_{ca=ac}}
\newcommand{\Gdada}{G_{da=ad}}
\newcommand{\Gcbcb}{G_{cb=bc}}
\newcommand{\Gdbdb}{G_{db=bd}}
\newcommand{\Gdcadca}{G_{dca=acd}}
\newcommand{\Gcdacda}{G_{cda=adc}}
\newcommand{\Gcbacba}{G_{cba=abc}}
\newcommand{\Gcabcab}{G_{cab=bac}}
\newcommand{\Gcqadpa}{G_{c^qa=ad^p}}
\newcommand{\Gdpacqa}{G_{d^pa=ac^q}}
\newcommand{\Gcbpcaq}{G_{cb^p=a^qc}}
\newcommand{\Gcaqcbp}{G_{ca^q=b^pc}}
\newcommand{\Z}{\mathbb Z}
\newcommand{\vecF}{\vec F(2^{\Z^2})}

\newcommand{\Shift}{\vec F(2^{\Z^2})}
\newcommand{\GammaT}{\vec\Gamma}
\newcommand{\chic}{\chi_{co}}
\newcommand{\Cp}{\vec C}
\newcommand{\PathTwo}{\mathcal P_2}
\tikzset{
  BdyNode/.style         ={circle, fill, inner sep=0pt, minimum size=3pt},
  nodeStyle/.style       ={circle, draw, fill=white, inner sep=0pt, minimum size=7pt},
  edgeStrongStyle/.style ={line width=3pt, line cap=rect},
  BoxLabelStyle/.style   ={scale=1.2},
  BoxStyle/.style        ={scale=1, anchor=north west, rectangle, draw},
  XBoxStyle/.style       ={BoxStyle, minimum width=1cm,   minimum height=1cm},
  ABoxStyle/.style       ={BoxStyle, minimum width=1cm,   minimum height=0.8cm},
  BBoxStyle/.style       ={BoxStyle, minimum width=1cm,   minimum height=1.2cm},
  CBoxStyle/.style       ={BoxStyle, minimum width=0.8cm, minimum height=1cm},
  DBoxStyle/.style       ={BoxStyle, minimum width=1.2cm, minimum height=1cm}
}

\newcommand{\cT}{\mathcal T}

\newcommand{\Pfive}{P_5}
\begin{document}
\title[Continuous oriented chromatic number]
{The continuous oriented chromatic number of directed Schreier graphs of $\mathbb Z^2$-shift actions}

\author[Ruijun Wang]{Ruijun Wang}
\address{School of Mathematical Sciences and School of Pre-university, Dalian Minzu University}
\email{wangruijun@dlnu.edu.cn}
\thanks{}

\subjclass[2020]{Primary 03E15; Secondary 05C15, 05C20}

\keywords{continuous combinatorics, directed Schreier graph, oriented chromatic number}

\date{}

\maketitle
\begin{abstract}
Let $\vec F(2^{\mathbb Z^2})$ be the directed Schreier graph on the free part of the Bernoulli shift $\mathbb Z^2\curvearrowright 2^{\mathbb Z^2}$, with arcs in the two coordinate directions. We prove that the continuous oriented chromatic number of it is 7, that is, there is a tournament on 7 vertices receiving a continuous graph homomorphism from $\vec F(2^{\mathbb Z^2})$ and there is no continuous graph homomorphism from $\vec F(2^{\mathbb Z^2})$ to any tournament on 6 vertices. And we prove that the Borel and measurable oriented chromatic number of directed Schreier graph $\vec F(2^{\mathbb Z^n})$, $n>1$ is 5.
\end{abstract}

\maketitle
\tableofcontents

\section{Introduction}

In the seminal paper \cite{KST1999}, Kechris, Solecki and Todorcevic initiated the fundamental descriptive combinatorics theory. In particular, they proved that a Borel graph of bounded degree $d$ has Borel chromatic number at most $d+1$. In \cite{Marks}, Marks proved that the Borel chromatic number of the Schreier graph $F(2^{\mathbb F_2})$ is $2n+1$ which reaches the upper bound. In \cite{GJ2015}, Gao and Jackson developed the rectangular partition method for Schreier graphs of countable abelian groups actions, and they proved that there is a continuous proper 4-coloring for the Schreier graph $F(2^{\mathbb Z^2})$. Subsequently, in \cite{GJKS2025}, Gao, Jackson, Krohne and Seward studied the hyper-aperiodic elements method and proved the Twelve Tiles Theorem, which converts many continuous questions on \(F(2^{\mathbb Z^2})\) into finite questions on twelve rectangular tiles, and they prove that there is no continuous proper 3-coloring for the Schreier graph $F(2^{\mathbb Z^2})$. Combined with previous results, we know that the continuous chromatic number is 4.

An oriented coloring $c$ of an oriented graph $G$ is a graph homomorphism from $G$ to an oriented graph $H$. The oriented chromatic number $\chi_o(G)$ of an oriented graph $G$ is the least size of an oriented graph $H$ to which there is a graph homomorphism.$$\chi_o(G)=\inf\{|V(H)|:c:G\to H\text{ is a graph homomorphism, }H\text{ is an oriented graph}\}.$$
It resembles proper colorings and the chromatic number, see Kostochka, Sopena and Zhu \cite{KSZ1997} and Sopena \cite{Sopena1997}. For a topological oriented graph, we can define its continuous or Borel oriented chromatic number by requiring the coloring to be continuous or Borel respectively.

Our main result is the following.

\begin{theorem}\label{thm:main}
The continuous oriented chromatic number of the directed Schreier graph
$\vecF$ is
\[
             \chic(\Shift)=7.
\]
\end{theorem}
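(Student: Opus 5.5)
The proof splits into an upper bound $\chic(\Shift)\le 7$ and a lower bound $\chic(\Shift)\ge 7$. Both will go through the Twelve Tiles Theorem of Gao--Jackson--Krohne--Seward, applied to the oriented setting. Every vertex of $\Shift$ has out-arcs $x\to a\cdot x$ and $x\to b\cdot x$, where $a,b$ are the generators of $\Z^2$. So a continuous oriented coloring into a tournament $T$ is a continuous map $c$ with $c(x)\to c(a\cdot x)$ and $c(x)\to c(b\cdot x)$ arcs of $T$. It suffices to consider tournaments $T$, since any oriented target graph can be completed to a tournament on the same vertex set.

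By the Twelve Tiles Theorem, a continuous homomorphism $\Shift\to T$ exists if and only if there are positive integers $n,p,q$ and a $T$-coloring of the twelve rectangular tiles, built from blocks of sizes $n\times n$, $(n+1)\times n$, $n\times(n+1)$, and so on, with the usual $p,q$ overlap parameters. The coloring must be such that every arc inside a tile is mapped to an arc of $T$, and every arc across the boundary of two tiles is also mapped to an arc of $T$. The tiles can be juxtaposed in any admissible way, so the tile colorings must agree on the shared boundary words.

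This is why the notation $\Gcqadpa$, $\Gcaca$, etc.\ appears in the paper. Each tile-boundary compatibility condition becomes a directed graph on $V(T)$ whose arcs encode which boundary words of given length can occur. The condition then reduces to the existence of closed walks in $T$ satisfying relations like $c^qa=ad^p$. Here $a,b,c,d$ are the $T$-walks read along the four sides of a tile, and the subscripts record commuting or twisting relations.

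\textbf{Upper bound.} I will exhibit an explicit tournament on $7$ vertices. I will try the Paley tournament $QR_7$ first, with $i\to j$ iff $j-i\in\{1,2,4\}$. Its high symmetry and the fact that every vertex has out-neighborhood of size $3$ give it room for walks of all sufficiently large lengths. I will then write down explicit colorings of the twelve tiles for suitable $n,p,q$, presumably in tables as the \texttt{appcerttable} environment suggests, and verify the boundary conditions. The key point is that in $QR_7$ horizontal walks of length $n$ and $n+1$ can be chosen to start and end with compatible colors. One can also reduce each tile to coloring a rectangle by $x+y$ type linear patterns, namely $c(i,j)=\alpha i+\beta j \bmod 7$ with $\alpha,\beta\in\{1,2,4\}$. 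The tiles are then patched along seams of width $1$, using short walks in $QR_7$ to absorb the length discrepancy between $n$ and $n+1$.

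\textbf{Lower bound.} This is the main obstacle: no tournament $T$ on $6$ vertices admits a continuous homomorphism. Since monotonicity lets us pad tournaments, it suffices to treat all $56$ tournaments on $6$ vertices up to isomorphism. For each, I will derive a contradiction from the Twelve Tiles necessary condition. The necessary condition is that there exist closed walks in $T$ of coprime lengths, along two directions, that commute in the appropriate sense. Concretely, the boundary of a tile gives closed walks of lengths $(n, n+1)$ in both coordinates, and homomorphism along grid squares forces relations $\Gcqadpa$ between them.

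I will first eliminate tournaments with a sink or source, or more generally non-strongly-connected tournaments, since the image of a minimal invariant set must lie in a strong component; this reduces to strong tournaments on at most $6$ vertices. For each remaining $T$, I will compute the ``square graph'' of pairs $(u,v)$ with a commuting square $u\to u'$, $v\to v'$, $u\to v$, $u'\to v'$. I will then show that the required twisted relation $c^qa=ad^p$ has no solution, via a finite computer-checkable certificate: an invariant such as a $\Z$-valued height or winding function on $T$ that is preserved by squares but would have to change by a nonzero amount along the twisted relation. I expect most cases to fall to a simple parity or winding invariant, with a few hard tournaments, such as the nearly regular ones on $6$ vertices, needing exhaustive case analysis on the tile boundary words.
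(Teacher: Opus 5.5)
\textbf{Upper bound.} This part fails. The Paley tournament $\mathrm{PT}(7)$ receives no continuous homomorphism from $\Shift$ at all. So neither your linear patterns $\alpha i+\beta j \bmod 7$ nor any other tile coloring into it can exist.

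To see this, give the arc $i\to j$ the first, second or third unit vector of $\mathbb Z^3$ as energy, according as $j-i\equiv 1,2,4 \pmod 7$. Up to swapping middle vertices, the only diamonds at $0$ are $(0,1,2,3)$, $(0,1,4,5)$ and $(0,2,4,6)$, and each is balanced. By vertex-transitivity the energy is therefore diamond-compatible, so path energies on a grid depend only on endpoints. On the long tile $\Tcqadpa$ the two boundary routes give $qE(\gamma)=pE(\delta)$ coordinatewise. Here $\gamma,\delta$ are the boundary closed walks of the coprime lengths $p,q$; they are not of lengths $n,n+1$, since the blocks are $n\times(p-n)$, $n\times(q-n)$, and so on.

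Coprimality forces each coordinate of $E(\gamma)$ to be $0$ or $p$. Hence $\gamma$ and $\delta$ each use a single step $s\in\{1,2,4\}$, which gives $7\mid p$ and $7\mid q$, a contradiction. This is the remark in the paper's Section 8. The symmetry you counted on is exactly what produces the invariant.

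The paper instead uses a non-regular tournament $R_7$, found by search, with an explicit, checked coloring of $\GammaT_{1,3,4}$ given in the appendix. Some concrete certificate of this kind is indispensable, and your proposal supplies none.

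\textbf{Lower bound.} Your invariant is too vague to give a contradiction, and the cases you defer to ``exhaustive case analysis'' are exactly the hard ones.

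A $\mathbb Z$-valued square-invariant energy only yields $qE(\gamma)=pE(\delta)$. That equation is consistent on its own, for instance with $E(\gamma)=p\lambda$ and $E(\delta)=q\lambda$ for an integer $\lambda$. The paper's extra ingredient is that the energy is $\{0,1\}$-valued and both energy classes are coprime-free, meaning neither contains closed walks of two coprime lengths. Then the equation forces closed walks of lengths $p$ and $q$ inside a single class, which is impossible. This settles $31$ of the $35$ strong tournaments: $27$ by the order energy via the interval criterion, and $4$ by ad hoc energies.

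For $T_{32},\dots,T_{35}$ the paper uses a different mechanism on the torus tile $\Cp_p\square\Cp_q$:
\begin{itemize}
\item a set of marked directed $2$-paths obeys the local transfer law $M(s)=M(r)-1$ between consecutive rows;
\item acyclicity of the two de Bruijn digraphs shows that no row is fully marked or fully unmarked;
\item hence $M_0=M_0-q$ is impossible, since translation by $q$ is transitive on $\mathbb Z/p\mathbb Z$.
\end{itemize}
Some uniform argument of this kind is necessary. A case analysis over boundary words cannot be finite, because refuting a continuous homomorphism means ruling out $\GammaT_{n,p,q}\to T$ for every $n$ and infinitely many coprime pairs $(p,q)$.
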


For the construction part, we show that there is a graph homomorphism from $\GammaT_{1,3,4}$ to a tournament on 7 vertices. For the obstruction part, we show that there is no graph homomorphism from $\GammaT_{n,p,q}$ to any tournament on 6 vertices, the proof uses only two of the twelve tiles.

\begin{enumerate}[label=\textup{(\roman*)}]
\item A \emph{long tile} has a top boundary consisting of \(q\) copies
of a directed \(p\)-cycle and a bottom boundary consisting of \(p\)
copies of a directed \(q\)-cycle.  A curl-free edge energy equates the
two boundary energies.  Since \(\gcd(p,q)=1\), an energy forces
both boundary cycles to be energy-monochromatic.

\item A \emph{torus tile} is
\(\Cp_p\square\Cp_q\).  For the four tournaments not handled by an
exact edge energy, a local marker attached to directed triples propagates
diagonally across this torus.  The marker set is therefore invariant
under translation by \(q\) in \(\mathbb Z/p\mathbb Z\), which is
impossible when it is nonempty and proper.
\end{enumerate}

The classical classification says that there
are 35 non-isomorphic strong tournaments on six vertices, see,
for example, Moon's monograph \cite{Moon1968}. 27 of them are handled uniformly by an
order energy method, 4 more by non-order energies, and the
last four by two marker systems.

In \cite{GJKS}, Gao, Jackson, Krohne and Seward developed the Borel toast structure and proved that the Borel chromatic number $\chi_B(F(2^{\mathbb Z^n}))=3$. In \cite{CJMST}, Conley, Jackson, Marks, Seward and Tucker-Drob showed an alternative proof using Borel asymptotic dimension. In this paper, we show the following theorem.

\begin{theorem}\label{thm:Borel}
    The Borel oriented chromatic number of the directed Schreier graph
$\vec F(2^{\mathbb Z^n})$, $n>1$ is
\[
             \chi_{Bo}(\vec F(2^{\mathbb Z^n}))=5.
\]
\end{theorem}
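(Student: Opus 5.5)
The claim splits into a lower bound $\chi_{Bo}\ge 5$ and an upper bound $\chi_{Bo}\le 5$. I plan to obtain the lower bound finitely, and the upper bound from a Borel toast (or Borel asymptotic dimension) construction following \cite{GJKS} and \cite{CJMST}.

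\emph{Lower bound.} Every orbit of $\vec F(2^{\mathbb Z^n})$ is a copy of the directed grid $\vec{\mathbb Z}^n$, with arcs $x\to x+e_i$. It therefore suffices to show that the oriented chromatic number of the directed grid $\vec{\mathbb Z}^2$ is already at least $5$; this is an abstract combinatorial statement, so Borelness plays no role. First, an oriented target $H$ on at most $4$ vertices embeds in a tournament on $4$ vertices, so I will check the four tournaments on $4$ vertices one by one.
\begin{itemize}
\item The transitive one and those containing a sink or source vertex fail on long directed paths. Since $\vec{\mathbb Z}^2$ contains directed cycles of length $4$ (the square $x\to x+e_1\to x+e_1+e_2 \leftarrow x+e_2 \leftarrow x$ is not directed, but it gives ``commuting'' constraints), a homomorphism $f$ must satisfy the following: for every vertex $v$, the out-neighbourhood constraint $f(v)\to f(v+e_1)$, $f(v)\to f(v+e_2)$, $f(v+e_1)\to f(v+e_1+e_2)\leftarrow f(v+e_2)$.
\item The case analysis is then that on each tournament of order $4$ there is a local obstruction: one tries to fill a small $3\times3$ or $4\times 4$ block of the grid. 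In fact the infinite directed path in the $e_1$ direction must map to a closed walk, which forces the image to lie in the strong component, i.e.\ in the unique strong $4$-tournament or a $3$-cycle.
\end{itemize}
I would then check by direct finite search that neither the $3$-cycle nor the strong $4$-tournament admits a homomorphism from a sufficiently large grid block. Concretely, for the $3$-cycle, the row and column maps are both forced to be $+1$ shifts, and the square then forces two different values at $v+e_1+e_2$ when pushed far enough; the strong $4$-tournament needs a slightly larger block.

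\emph{Upper bound.} I will fix a tournament $T_5$ on $5$ vertices (the Paley-like circulant $i\to i+1, i+2 \pmod 5$) and construct a Borel homomorphism. Two ingredients are needed:
\begin{enumerate}
\item[(a)] a finite tiling lemma: every finite rectangular box (or toast piece) can be colored into $T_5$ extending any given coloring on a boundary collar of fixed width;
\item[(b)] the Borel toast of \cite{GJKS}, which provides a nested Borel family of finite regions exhausting each orbit, with boundaries separated by a large distance.
\end{enumerate}
The coloring is defined inductively: each new toast piece is colored, compatibly with the colors already placed on the smaller pieces inside it, by a Borel choice of a canonical extension. Circulant structure helps, because the map $v\mapsto \sum_i v_i \pmod 5$ is already a homomorphism from the grid to $T_5$. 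The flexibility needed to glue regions then comes from the fact that $T_5$ has out-neighbourhood $\{i+1,i+2\}$, so diagonal "phase shifts" can be absorbed in bounded corridors. The case of general $n$ reduces to the same corridor argument coordinatewise. The measurable statement should follow from the same construction, since the toast can be taken on a conull set, or directly from the lower and upper bounds.

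\emph{Main obstacle.} I expect the main obstacle to be (a): proving that $T_5$ is "flexible", i.e.\ that arbitrary boundary data along a separating corridor of fixed width can be interpolated. My first attempt would be a potential-function argument: encode a coloring locally as a phase in $\mathbb Z/5$, and show that the defect between two regions' phases can be corrected by a corridor of width about $4$, since the steps $+1$ and $+2$ generate $\mathbb Z/5$ with enough slack in both coordinate directions.
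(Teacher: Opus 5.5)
There is a genuine gap in your lower bound, and the flexibility step (a) of your upper bound is false as stated.

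\textbf{The lower bound.} It cannot work, because no finite, or even abstract, combinatorial obstruction exists. The map $v\mapsto\sigma(v)\bmod 3$, with $\sigma(v)=\sum_i v_i$, is a homomorphism from the directed grid $\vec{\mathbb Z}^n$ to the directed $3$-cycle. Both routes to $v+e_1+e_2$ add $2$, so the conflict you claim on a square does not occur. The unique strong $4$-tournament $0\to1,\ 0\to2,\ 1\to2,\ 1\to3,\ 2\to3,\ 3\to0$ contains the directed triangle $0\to1\to3\to0$. So every grid block maps into both targets, and your finite search will find nothing; the abstract oriented chromatic number of the grid is $3$, as the paper itself remarks.

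The bound $\chi_{Bo}>4$ is genuinely descriptive-set-theoretic. The paper's argument runs as follows. Vertex $0$ has unique in-neighbour $3$ and vertex $3$ has unique out-neighbour $0$, so a Borel homomorphism $c$ satisfies $c(e_1\cdot x)=0\iff c(x)=3\iff c(e_2\cdot x)=0$. Hence $c^{-1}(0)$ is invariant under the generically ergodic $\mathbb Z$-action generated by $e_1-e_2$, so it is meager or comeager.
\begin{itemize}
\item It is not comeager, since $c^{-1}(0)\cap(-e_1)\cdot c^{-1}(0)=\emptyset$.
\item If it is meager, then so is $c^{-1}(3)\subseteq(-e_1)\cdot c^{-1}(0)$. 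On a comeager invariant set $c$ then takes values in $\{1,2\}$, which is impossible along $x\to e_1\cdot x\to 2e_1\cdot x$.
\end{itemize}
The $3$-cycle is handled the same way, since there $c(e_1\cdot x)=c(e_2\cdot x)$ makes every colour class invariant. Non-strong targets on at most $4$ vertices reduce to these cases, because a source or sink colour is never used: every grid vertex has both in- and out-arcs.

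\textbf{The upper bound.} Your architecture matches the paper's: target $P_5$, Borel toast, recursive filling of shells, and a height function. Step (a), however, is false as stated, and $\sigma\bmod 5$ is the wrong boundary model.

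Write a $P_5$-colouring as $H\bmod 5$ and set $F=H-\sigma$. The constraint becomes $F(z+e_i)-F(z)\in\{0,1\}$, so $F$ must be nondecreasing along monotone paths. Arbitrary collar data, even locally valid data, can violate this across a box. For example, take $F$ steep on the lower half of the left side and on the upper half of the right side, and flat elsewhere. Then $F(u)>F(u+Le_1)$ for the midpoints $u$ and $u+Le_1$ of the two sides, which are joined by a horizontal segment through the box. So you need an invariant that controls every piece's boundary colouring.

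If that invariant is $\sigma\bmod5$ up to phase, then $F$ is constant on each boundary. Suppose an inner piece is joined to the outer boundary by monotone shell paths in both directions. Then its constant must equal the outer constant exactly, and a phase defect $\not\equiv 0\pmod 5$ cannot be absorbed by any corridor width.

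The paper's fix has three parts:
\begin{itemize}
\item Each toast piece $D$ carries on $\partial D$ the model $H_0=\sigma+\lfloor\sigma/2\rfloor$ in its own coordinates. This gives $F$ slope $1/2$, which leaves slack in both directions.
\item Each inner piece's model height is shifted by a multiple of $5$ to lie within $3$ of $F_0=\lfloor\sigma/2\rfloor$.
\item The extension is the explicit formula $F(z)=\min_{s}\bigl(f(s)+\delta_Q(s,z)\bigr)$. The required inequality $f(v)-f(u)\le\delta_Q(u,v)$ follows from $\tfrac{P-N}{2}+7\le P$ whenever $P+N\ge 100$.
\end{itemize}
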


For the construction part, we use the Borel toast structure and show that there is a Borel graph homomorphism from $\vec F(2^{\mathbb Z^n})$, $n>1$ to the regular tournament on 5 vertices. For the obstruction part, we show that there is no Borel (in fact measurable) graph homomorphism from $\vec F(2^{\mathbb Z^n})$, $n>1$ to the strong tournament on 4 vertices.

This paper is partially generated by artificial intelligence. The AI model is gpt 5.5. The author had some partial results but was stuck with computer coding. The AI serves as a programmer based on human ideas. All AI generated contents are verified by human.

The rest of the paper is organized as follows. In Section 2, we fix some notations and introduce the directed twelve tiles theorem, we fix an enumeration of 35 non-isomorphic strong tournaments on 6 vertices. In Section 3, we show that by the energy function method, there is no graph homomorphism from the long tile to 31 different tournaments. In Section 4, we show that by marker of directed path with length 2, there is no graph homomorphism from the torus tile to the remaining tournaments. In Section 5, we show that there is a continuous oriented coloring with 7 colors. In Section 6, we show that the Borel oriented chromatic number $\chi_{Bo}(\vec F(2^{\mathbb Z^n}) )=5$, $n>1$. In Section 7, we show a proposition similar to \cite[Theorem 4.4.1]{GJKS2025}. In Section 8, we discuss some future work. We leave some certificates in Appendix A and Appendix B.

\section{Preliminaries}\label{sec:prelim}

\subsection{Continuous oriented chromatic number}

A \emph{directed graph} $D$ or \emph{digraph} is an irreflexive binary relation $A(D)$ on a vertex set $V(D)$, $(x,x)\notin A(D)$ where $A(D)$ is the arc set. An \emph{oriented graph} $O$ is a digraph and an asymmetric binary relation $A(O)$, $(x,y)\in A(O)\Rightarrow (y,x)\notin A(O)$. An \emph{oriented coloring} $c$ is a graph homomorphism \(c:G\to H\) maps every arc of \(G\)
to an arc of \(H\) where both $G$ and $H$ are oriented graphs. The oriented chromatic number is
\[
 \chi_o(G)=\min\{|V(H)|:H\text{ is oriented and }c:G\to H\text{ is a graph homomorphism}\}.
\]
For example, the oriented chromatic number of the directed Cayley graph of $\mathbb Z^n$ with standard generators is 3.

For a topological graph, \emph{the continuous oriented chromatic number} \(\chic\) is defined by requiring the graph
homomorphism to be continuous.

Every oriented graph on \(k\) vertices is a spanning subdigraph
of a tournament on \(k\) vertices. Consequently, without loss of generality, we can assume that $H$ is a tournament.

\subsection{35 non-isomorphic strong tournaments}

A \emph{tournament} $T$ is a complete oriented graph, i.e., for any unordered pair $\{u,v\}$ either $(u,v)\in A(T)$ or $(v,u)\in A(T)$. A tournament is called a \emph{strong tournament} if it is strongly connected. There are 56 different non-isomorphic tournaments on 6 vertices, and 35 of them are strong, see, for example, \cite{Tournament, OEIS-A051337}. A tournament is determined by its set \(B\) of \emph{backward arcs}
\(j\to i\) with \(j>i\), and every pair not in \(B\) is oriented forward.

We define the \emph{canonical code} of a tournament $T$. Let $p_0=01,p_1=02,p_2=03,\cdots,p_{\binom{n}{2}-1}=(n-2)(n-1)$ be pairs in lexicographic order. For any re-assignment of labeling $\sigma\in S_n$, let
\[
 b^\sigma_k=
 \begin{cases}
 1,&\sigma(i_k)\to \sigma(j_k),\\
 0,&\sigma(j_k)\to \sigma(i_k).
 \end{cases}
\]
Then the code of labeling $\sigma$ is $$c_\sigma(T)=\sum_0^{\binom{n}{2}-1}b^\sigma_k2^k.$$ The canonical code of tournament $T$ is $c(T)=\min\limits_{\sigma\in S_n}c_\sigma(T)$. Two non-isomorphic tournaments have different canonical codes.

We illustrate all 35 strong tournaments as follows. The score sequence of a tournament is a sequence of out-degrees. The codes are pairwise different and therefore the list is exhaustive.

\begin{center}
\small
\begin{longtable}{@{}c c >{\raggedright\arraybackslash}p{0.44\textwidth} c@{}}
\toprule
case&score sequence&backward arcs \(B\)&code\\
\midrule
\endfirsthead
\toprule
case&score sequence&backward arcs \(B\)&code\\
\midrule
\endhead
$T_{1}$ & $(1,1,2,3,4,4)$ & $\{50\}$ & 16 \\
$T_{2}$ & $(1,2,2,2,4,4)$ & $\{50,52\}$ & 20 \\
$T_{3}$ & $(1,2,2,3,3,4)$ & $\{50,51\}$ & 24 \\
$T_{4}$ & $(1,1,3,3,3,4)$ & $\{50,51,53\}$ & 26 \\
$T_{5}$ & $(1,2,2,3,3,4)$ & $\{50,51,52\}$ & 28 \\
$T_{6}$ & $(1,2,2,2,4,4)$ & $\{21,41,50,51\}$ & 81 \\
$T_{7}$ & $(1,1,2,3,4,4)$ & $\{21,30,40,41,43,51\}$ & 83 \\
$T_{8}$ & $(1,2,2,3,3,4)$ & $\{41,50\}$ & 144 \\
$T_{9}$ & $(1,2,2,3,3,4)$ & $\{21,30,40,41,50,51\}$ & 145 \\
$T_{10}$ & $(1,2,2,3,3,4)$ & $\{21,30,40,41,50,51,53\}$ & 146 \\
$T_{11}$ & $(1,1,3,3,3,4)$ & $\{21,40,41,50,51,53\}$ & 147 \\
$T_{12}$ & $(2,2,2,2,3,4)$ & $\{41,50,52\}$ & 148 \\
$T_{13}$ & $(1,2,2,3,3,4)$ & $\{21,30,41,50,51\}$ & 149 \\
$T_{14}$ & $(2,2,2,2,3,4)$ & $\{41,50,51\}$ & 152 \\
$T_{15}$ & $(1,2,2,3,3,4)$ & $\{10,32,40,50,51,54\}$ & 153 \\
$T_{16}$ & $(1,2,2,3,3,4)$ & $\{41,50,51,53\}$ & 154 \\
$T_{17}$ & $(2,2,2,2,3,4)$ & $\{41,50,51,52\}$ & 156 \\
$T_{18}$ & $(1,2,2,3,3,4)$ & $\{31,50,51\}$ & 177 \\
$T_{19}$ & $(1,2,2,3,3,4)$ & $\{31,50,51,52\}$ & 181 \\
$T_{20}$ & $(2,2,2,3,3,3)$ & $\{40,50,51\}$ & 280 \\
$T_{21}$ & $(1,2,3,3,3,3)$ & $\{31,40,41,50,51\}$ & 281 \\
$T_{22}$ & $(1,2,3,3,3,3)$ & $\{40,50,51,53\}$ & 282 \\
$T_{23}$ & $(1,2,3,3,3,3)$ & $\{30,50,51,53\}$ & 313 \\
$T_{24}$ & $(2,2,2,3,3,3)$ & $\{40,42,50,51\}$ & 344 \\
$T_{25}$ & $(2,2,2,3,3,3)$ & $\{31,40,50,51\}$ & 345 \\
$T_{26}$ & $(2,2,2,3,3,3)$ & $\{40,41,50,51\}$ & 408 \\
$T_{27}$ & $(2,2,2,3,3,3)$ & $\{31,40,42,50,51\}$ & 1332 \\
$T_{28}$ & $(1,2,2,2,4,4)$ & $\{42,50\}$ & 80 \\
$T_{29}$ & $(2,2,2,2,3,4)$ & $\{42,50,51\}$ & 88 \\
$T_{30}$ & $(1,1,3,3,3,4)$ & $\{31,50\}$ & 176 \\
$T_{31}$ & $(1,2,3,3,3,3)$ & $\{31,40,50\}$ & 377 \\
$T_{32}$ & $(1,1,2,3,4,4)$ & $\{10,21,30,31,32,40,41,42,43,51,52,53,54\}$ & 18 \\
$T_{33}$ & $(1,2,2,3,3,4)$ & $\{20,21,30,32,41,42,43,51,52,53,54\}$ & 89 \\
$T_{34}$ & $(1,1,2,3,4,4)$ & $\{20,50\}$ & 22 \\
$T_{35}$ & $(1,2,2,3,3,4)$ & $\{10,31,40,50\}$ & 150 \\
\bottomrule
\end{longtable}
\end{center}

\subsection{The directed Schreier graph}

A directed Schreier graph of a marked group is a natural oriented graph if there is no generator of index 2. Let \(2^{\mathbb Z^2}\) equip with the product topology which is homeomorphic to the Cantor space $\{0,1\}^\mathbb N$ and carry the right shift action
\[
       (g\cdot x)(h)=x(h+g)
       \qquad(g,h\in\mathbb Z^2).
\]
And its free part is
\[
 F(2^{\mathbb Z^2})
   =\{x:g\cdot x\ne x\text{ for every }(0,0)\ne g\in\mathbb Z^2\}
\]
which is a $G_\delta$ subspace, and hence it is a Polish space.

Let $\{$\(e_1=(1,0)\), \(e_2=(0,1)\)$\}$ be the set of standard generators. The \emph{directed Schreier graph}
\(\Shift\) has vertex set \(F(2^{\mathbb Z^2})\) and arcs
\[
             x\longrightarrow e_1\cdot x,
             \qquad
             x\longrightarrow e_2\cdot x.
\]
Every connected component of it is a copy of the \emph{directed Cayley graph} \(\mathbb Z^2\) with standard generators, and it is an oriented graph.

\subsection{The Twelve Tiles Theorem}

For integers $1\le n<p,q$ with $\gcd(p,q)=1$,
Gao, Jackson, Krohne, and Seward construct a finite graph
\(\Gamma_{n,p,q}\) from twelve rectangular tiles by identifying equally
labeled boundary blocks.  Orient every horizontal and vertical edge in
the positive coordinate direction and write the resulting digraph as
\(\GammaT_{n,p,q}\). The following is the specialization of the Twelve Tiles Theorem
\cite[Theorem 2.3.5]{GJKS2025}.

\begin{theorem}[Directed Twelve Tiles Theorem]\label{thm:twelve}
For a finite directed graph \(H\), the following are equivalent.
\begin{enumerate}[label=\textup{(\roman*)}]
\item There is a continuous graph homomorphism \(\Shift\to H\).
\item For some \(1\le n<p,q\) with \(\gcd(p,q)=1\), there is a graph homomorphism \(\GammaT_{n,p,q}\to H\).
\item For every \(n\), and all sufficiently large coprime \(p,q>n\),
there is a graph homomorphism \(\GammaT_{n,p,q}\to H\).
\end{enumerate}
\end{theorem}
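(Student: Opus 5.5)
The plan is to derive this from the undirected Twelve Tiles Theorem \cite[Theorem 2.3.5]{GJKS2025}. That theorem is stated for subshifts of finite type, or equivalently for continuous maps into finite structures defined by local rules. The point is that a directed graph homomorphism into $H$ is again a local rule on the $\mathbb Z^2$ lattice, so the directed version should follow by re-running the same argument with a different constraint.

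First I recast the problem. A continuous map $c:\Shift\to H$ on the free part is the same thing as a continuous equivariant map into the $\mathbb Z^2$-subshift $X_H\subseteq V(H)^{\mathbb Z^2}$. Here $X_H$ consists of all labelings $y$ such that $(y(g),y(g+e_1))\in A(H)$ and $(y(g),y(g+e_2))\in A(H)$ for every $g$. This is a subshift of finite type with forbidden patterns of size $1\times 2$ and $2\times 1$. The correspondence is $x\mapsto (g\mapsto c(g\cdot x))$. Since $x\to e_i\cdot x$ is an arc, this lands in $X_H$, and continuity in the product topology holds because each coordinate is continuous. Conversely, an equivariant continuous map $F$ gives $c(x)=F(x)(0)$. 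The arcs of $\vec F$ go from $x$ to $e_i\cdot x$, which matches the positive orientation of the edges of $\GammaT_{n,p,q}$, as long as the tiles in \cite{GJKS2025} are drawn with the same coordinate convention. I will check this convention carefully; if it differs, I flip the roles by reflecting the tiles.

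Next I invoke the general statement in \cite{GJKS2025}: for an SFT $Y$ given by $1\times2$ and $2\times1$ rules, the existence of a continuous equivariant map $F(2^{\mathbb Z^2})\to Y$ is equivalent to the existence of a $Y$-respecting labeling of $\Gamma_{n,p,q}$ for some $n<p,q$ with $\gcd(p,q)=1$. It is also equivalent to the existence of such labelings for all $n$ and all sufficiently large coprime $p,q$. A labeling of $\Gamma_{n,p,q}$ respecting the rules of $X_H$ means exactly that every horizontal edge $(u,u+e_1)$ and every vertical edge $(u,u+e_2)$ is sent to an arc of $H$. This is precisely a homomorphism $\GammaT_{n,p,q}\to H$. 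For this to hold, the identification of boundary blocks in $\Gamma_{n,p,q}$ must preserve horizontal versus vertical position and direction, and I would verify that from the construction, where blocks are glued by translation.

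The main obstacle is the case where \cite{GJKS2025} proves the theorem only for proper colorings or symmetric graphs rather than general SFTs. In that case I would redo the two directions directly. For (i)$\Rightarrow$(iii), take a hyper-aperiodic element $x$ whose orbit closure lies in the free part. Uniform continuity on this compact orbit closure gives a window size. Then embed each tile of $\Gamma_{n,p,q}$ into the orbit of $x$, following the GJKS construction of tile copies, so that the induced labeling is consistent on identified blocks. This consistency holds because identified blocks come from the same local pattern of $x$, and the same locality argument applies unchanged to directed constraints. For (ii)$\Rightarrow$(i), use the GJKS clopen rectangular tilings of $F(2^{\mathbb Z^2})$ by tiles of sizes drawn from $\{p,q\}$-combinations, together with marker regions. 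Copy the labeling of $\GammaT_{n,p,q}$ onto each rectangle according to its tile type. Orientation is preserved because all the pieces are placed by translations. The implication (iii)$\Rightarrow$(ii) is trivial.
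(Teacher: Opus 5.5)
Your proposal is correct and follows the paper's route. The paper gives no independent argument: it presents this theorem as the specialization of the Gao--Jackson--Krohne--Seward Twelve Tiles Theorem (stated for subshifts of finite type) to the subshift $X_H$ of directed $H$-homomorphisms defined by $1\times 2$ and $2\times 1$ windows, exactly as in your first two paragraphs. Your fallback reconstruction of the two implications is not needed, though your checks that block identifications are translations (so arc directions are preserved) and that the coordinate conventions match are the right checks for the specialization.
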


In the figures, five blocks $R_\times,R_a,R_b,R_c,R_d$ are rectangular oriented grid graphs. The sizes of them are as follows.

\begin{alignat*}{8}
R_\times: &\quad\quad &  n   &  &\;\times\;&&      &n  \\
R_a: &&             n& &\;\times\;&&      (p-&n)  \\
R_b: &&             n& &\;\times\;&&      (q-&n)  \\
R_c: &&               (p-n&)  &\;\times\;&& &n \\
R_d: &&              (q- n &)  &\;\times\;&& &n
\end{alignat*}

Each directed tile is a rectangular oriented grid graph with labeled blocks. $\GammaT_{n,p,q}$ is the quotient graph of the twelve directed tiles by identifying equally
labeled boundary blocks.

The following two geometric facts of two of the tiles will be used in the proof.

\begin{enumerate}[label=\textup{(\alph*)}]
\item The tile denoted \(T_{c^q a=ad^p}\) has top boundary equal to
\(q\) repetitions of the same directed closed walk of length \(p\),
bottom boundary equal to \(p\) repetitions of the same directed closed
walk of length \(q\), and identical left and right boundary color
sequences, see Figure \ref{fig:Long}.

\item The tile denoted \(T_{cb=bc}\), after its opposite boundary
blocks are identified, is the \emph{directed Cartesian torus}
\(
                        \Cp_p\square\Cp_q
\) where the \emph{Cartesian product} $D\square D'$ of two digraphs $D$ and $D'$ is the digraph with vertex set $V(D)\times V(D')$ and arc set consists of arcs $(i,j)\to (i',j')$ if
$i=i',\ j\to j'\in A(D')\quad\text{ or }\quad i\to i'\in A(D),\ j=j'.$
\end{enumerate}

If \(p=2\) or \(q=2\), the directed graph $\vec \Gamma_{n,p,q}$ contains a directed two-cycle and thus
cannot map to a tournament. Thus, we may
assume \(p,q\ge3\) when we are focused on oriented chromatic number.


\begin{figure}[H] 
\begin{tikzpicture}
\begin{scope}[scale=1,yscale=-1]
\begin{scope}[shift={(-5,2)}]  
  \foreach \loc in {(0,0),(0,1.8),(1.8,0),(1.8,1.8)} {
    \node[XBoxStyle] at \loc {$R_\times$};
  }
  \foreach \loc in {(0,1),(1.8,1)} {
    \node[ABoxStyle] at \loc {$R_a$};
  }
  \foreach \loc in {(1,0),(1,1.8)} {
    \node[CBoxStyle] at \loc {$R_c$};
  }

  \node[BoxLabelStyle] at (1.4,3.4) {\small $G^1_{n,p,q}$ or $\Gcaca$};
\end{scope}

\begin{scope}[shift={(-5,7)}]  
  \foreach \loc in {(0,0),(0,1.8),(2.2,0),(2.2,1.8)} {
    \node[XBoxStyle] at \loc {$R_\times$};
  }
  \foreach \loc in {(0,1),(2.2,1)} {
    \node[ABoxStyle] at \loc {$R_a$};
  }
  \foreach \loc in {(1,0),(1,1.8)} {
    \node[DBoxStyle] at \loc {$R_d$};
  }

  \node[BoxLabelStyle] at (1.6,3.4) {\small $G^3_{n,p,q}$ or $\Gdada$};
\end{scope}

\begin{scope}[shift={(0,2)}]  
  \foreach \loc in {(0,0),(0,2.2),(1.8,0),(1.8,2.2)} {
    \node[XBoxStyle] at \loc {$R_\times$};
  }
  \foreach \loc in {(0,1),(1.8,1)} {
    \node[BBoxStyle] at \loc {$R_b$};
  }
  \foreach \loc in {(1,0),(1,2.2)} {
    \node[CBoxStyle] at \loc {$R_c$};
  }

  \node[BoxLabelStyle] at (1.4,3.8) {\small $G^2_{n,p,q}$ or $\Gcbcb$};
\end{scope}

\begin{scope}[shift={(0,7)}]  
  \foreach \loc in {(0,0),(0,2.2),(2.2,0),(2.2,2.2)} {
    \node[XBoxStyle] at \loc {$R_\times$};
  }
  \foreach \loc in {(0,1),(2.2,1)} {
    \node[BBoxStyle] at \loc {$R_b$};
  }
  \foreach \loc in {(1,0),(1,2.2)} {
    \node[DBoxStyle] at \loc {$R_d$};
  }

  \node[BoxLabelStyle] at (1.6,3.8) {\small $G^4_{n,p,q}$ or $\Gdbdb$};
\end{scope}

\end{scope}
\end{tikzpicture}
\caption{\label{fig:Gamma-npq-torus}The torus tiles in $\Gamma_{n,p,q}$. }
\end{figure} 

\begin{figure}[H] 
\begin{tikzpicture}
\begin{scope}[scale=1,yscale=-1]
\begin{scope}[shift={(5,2)}]  
  \foreach \loc in {(0,0),(2.2,0),(4,0),(0,1.8),(1.8,1.8),(4,1.8)} {
    \node[XBoxStyle] at \loc {$R_\times$};
  }
  \foreach \loc in {(0,1),(4,1)} {
    \node[ABoxStyle] at \loc {$R_a$};
  }
  \foreach \loc in {(1,0),(2.8,1.8)} {
    \node[DBoxStyle] at \loc {$R_d$};
  }
  \foreach \loc in {(1,1.8),(3.2,0)} {
    \node[CBoxStyle] at \loc {$R_c$};
  }

  \node[BoxLabelStyle] at (2.5,3.4) {\small $G^5_{n,p,q}$ or $\Gdcadca$};
\end{scope}

\begin{scope}[shift={(5,10)}]  
  \foreach \loc in {(0,0),(1.8,0),(4,0),(0,1.8),(2.2,1.8),(4,1.8)} {
    \node[XBoxStyle] at \loc {$R_\times$};
  }
  \foreach \loc in {(0,1),(4,1)} {
    \node[ABoxStyle] at \loc {$R_a$};
  }
  \foreach \loc in {(1,0),(3.2,1.8)} {
    \node[CBoxStyle] at \loc {$R_c$};
  }
  \foreach \loc in {(1,1.8),(2.8,0)} {
    \node[DBoxStyle] at \loc {$R_d$};
  }

  \node[BoxLabelStyle] at (2.5,3.4) {\small $G^7_{n,p,q}$ or $\Gcdacda$};
\end{scope}

\begin{scope}[shift={(12,1)}]  
  \foreach \loc in {(0,0),(0,1.8),(0,4),(1.8,0),(1.8,2.2),(1.8,4)} {
    \node[XBoxStyle] at \loc {$R_\times$};
  }
  \foreach \loc in {(1,0),(1,4)} {
    \node[CBoxStyle] at \loc {$R_c$};
  }
  \foreach \loc in {(0,1),(1.8,3.2)} {
    \node[ABoxStyle] at \loc {$R_a$};
  }
  \foreach \loc in {(1.8,1),(0,2.8)} {
    \node[BBoxStyle] at \loc {$R_b$};
  }

  \node[BoxLabelStyle] at (1.4,5.6) {\small $G^6_{n,p,q}$ or $\Gcbacba$};
\end{scope}

\begin{scope}[shift={(12,9)}]  
  \foreach \loc in {(0,0),(0,2.2),(0,4),(1.8,0),(1.8,1.8),(1.8,4)} {
    \node[XBoxStyle] at \loc {$R_\times$};
  }
  \foreach \loc in {(1,0),(1,4)} {
    \node[CBoxStyle] at \loc {$R_c$};
  }
  \foreach \loc in {(0,1),(1.8,2.8)} {
    \node[BBoxStyle] at \loc {$R_b$};
  }
  \foreach \loc in {(1.8,1),(0,3.2)} {
    \node[ABoxStyle] at \loc {$R_a$};
  }

  \node[BoxLabelStyle] at (1.4,5.6) {\small $G^8_{n,p,q}$ or $\Gcabcab$};
\end{scope}
\end{scope}
\end{tikzpicture}
\caption{\label{fig:Gamma-npq-commute}The commutativity tiles in $\Gamma_{n,p,q}$.}
\end{figure} 

\begin{figure}[H] 
\begin{tikzpicture}
\begin{scope}[scale=1,yscale=-1]
\begin{scope}[shift={(0,0)}]  
  \foreach \loc in {(0,0.0),(1.8,0.0),(3.6,0.0),(5.4,0.0),(7.2,0.0),(10,0.0),
                    (0,1.8),(2.2,1.8),(4.4,1.8),(6.6,1.8),(10,1.8)} {
    \node[XBoxStyle] at \loc {$R_\times$};
  }
  \foreach \loc in {(0,1),(10,1)} {
    \node[ABoxStyle] at \loc {$R_a$};
  }
  \foreach \loc in {(1,0),(2.8,0),(4.6,0),(6.4,0),(9.2,0)} {
    \node[CBoxStyle] at \loc {$R_c$};
  }
  \foreach \loc in {(1,1.8),(3.2,1.8),(5.4,1.8),(7.6,1.8),(8.8,1.8)} {
    \node[DBoxStyle] at \loc {$R_d$};
  }

  \fill[fill=white] (8,-.1) rectangle (9.6, 2.9);

  \node at (8.8, 1.4) {\Large$\cdots$};

  \draw [decorate,decoration={brace,amplitude=10pt}]
  (0,-0.5) -- (11,-0.5) node [scale=1, black,midway, above, yshift=20pt]
        {$q$ copies of $R_c$, $q+1$ copies of $R_\times$};

  \draw [decorate,decoration={brace,amplitude=10pt}]
  (11,3.3) -- (0,3.3) node [scale=1, black,midway, below, yshift=-20pt]
        {$p$ copies of $R_d$, $p+1$ copies of $R_\times$};

  \node[BoxLabelStyle] at (5.5,5.5) {\small $G^9_{n,p,q}$ or $\Gcqadpa$};
\end{scope}

\begin{scope}[shift={(0,10)}]  
  \foreach \loc in {(0,1.8),(1.8,1.8),(3.6,1.8),(5.4,1.8),(7.2,1.8),(10,1.8),
                    (0,0),(2.2,0),(4.4,0),(6.6,0),(10,0)} {
    \node[XBoxStyle] at \loc {$R_\times$};
  }
  \foreach \loc in {(0,1),(10,1)} {
    \node[ABoxStyle] at \loc {$R_a$};
  }
  \foreach \loc in {(1,1.8),(2.8,1.8),(4.6,1.8),(6.4,1.8),(9.2,1.8)} {
    \node[CBoxStyle] at \loc {$R_c$};
  }
  \foreach \loc in {(1,0),(3.2,0),(5.4,0),(7.6,0),(8.8,0)} {
    \node[DBoxStyle] at \loc {$R_d$};
  }

  \fill[fill=white] (8,-.1) rectangle (9.6, 2.9);

  \node at (8.8, 1.4) {\Large$\cdots$};

  \draw [decorate,decoration={brace,amplitude=10pt}]
  (0,-0.5) -- (11,-0.5) node [scale=1, black,midway, above, yshift=20pt]
        {$p$ copies of $R_d$, $p+1$ copies of $R_\times$};

  \draw [decorate,decoration={brace,amplitude=10pt}]
  (11,3.3) -- (0,3.3) node [scale=1, black,midway, below, yshift=-20pt]
        {$q$ copies of $R_c$, $q+1$ copies of $R_\times$};

  \node[BoxLabelStyle] at (5.5,5.5) {\small $G^{10}_{n,p,q}$ or $\Gdpacqa$};
\end{scope}

\end{scope}
\end{tikzpicture}
\caption{\label{fig:Gamma-npq-horiz} The long horizontal tiles in $\Gamma_{n,p,q}$.}
\end{figure} 

\begin{figure}[H] 
\begin{tikzpicture}
\begin{scope}[scale=1,yscale=-1]
\begin{scope}[shift={(0,0)}]  
  \foreach \loc in {(0.0,0),(0.0,1.8),(0.0,3.6),(0.0,5.4),(0.0,7.2),(0.0,10),
                    (1.8,0),(1.8,2.2),(1.8,4.4),(1.8,6.6),(1.8,10)} {
    \node[XBoxStyle] at \loc {$R_\times$};
  }
  \foreach \loc in {(1,0),(1,10)} {
    \node[CBoxStyle] at \loc {$R_c$};
  }
  \foreach \loc in {(0,1),(0,2.8),(0,4.6),(0,6.4),(0,9.2)} {
    \node[ABoxStyle] at \loc {$R_a$};
  }
  \foreach \loc in {(1.8,1),(1.8,3.2),(1.8,5.4),(1.8,7.6),(1.8,8.8)} {
    \node[BBoxStyle] at \loc {$R_b$};
  }

  \fill[fill=white] (-.1,8) rectangle (2.9,9.6);

  \node at (1.4,8.8) {\Large$\vdots$};

  \draw [decorate,decoration={brace,amplitude=10pt}]
  (-0.5,11) -- (-0.5,0) node [scale=1, black,midway, left, xshift=-30pt, rotate=-90, anchor=center]
        {\begin{tabular}{c}$q$ copies of $R_a$, \\ $q+1$ copies of $R_\times$\end{tabular}};

  \draw [decorate,decoration={brace,amplitude=10pt}]
  (3.3,0) -- (3.3,11) node [scale=1, black,midway, right, xshift=30pt, rotate=-90, anchor=center]
        {\begin{tabular}{c}$p$ copies of $R_b$, \\ $p+1$ copies of $R_\times$\end{tabular}};

  \node[BoxLabelStyle] at (1.4,12) {\small $G^{11}_{n,p,q}$ or $\Gcbpcaq$};
\end{scope}

\begin{scope}[shift={(6,0)}]  
  \foreach \loc in {(1.8,0),(1.8,1.8),(1.8,3.6),(1.8,5.4),(1.8,7.2),(1.8,10),
                    (0,0),(0,2.2),(0,4.4),(0,6.6),(0,10)} {
    \node[XBoxStyle] at \loc {$R_\times$};
  }
  \foreach \loc in {(1,0),(1,10)} {
    \node[CBoxStyle] at \loc {$R_c$};
  }
  \foreach \loc in {(1.8,1),(1.8,2.8),(1.8,4.6),(1.8,6.4),(1.8,9.2)} {
    \node[ABoxStyle] at \loc {$R_a$};
  }
  \foreach \loc in {(0,1),(0,3.2),(0,5.4),(0,7.6),(0,8.8)} {
    \node[BBoxStyle] at \loc {$R_b$};
  }

  \fill[fill=white] (-.1,8) rectangle (2.9,9.6);

  \node at (1.4,8.8) {\Large$\vdots$};

  \draw [decorate,decoration={brace,amplitude=10pt}]
  (-0.5,11) -- (-0.5,0) node [scale=1, black,midway, left, xshift=-30pt, rotate=-90, anchor=center]
        {};

  \draw [decorate,decoration={brace,amplitude=10pt}]
  (3.3,0) -- (3.3,11) node [scale=1, black,midway, right, xshift=30pt, rotate=-90, anchor=center]
        {\begin{tabular}{c}$q$ copies of $R_a$, \\ $q+1$ copies of $R_\times$\end{tabular}};

  \node[BoxLabelStyle] at (1.4,12) {\small $G^{12}_{n,p,q}$ or $\Gcaqcbp$};
\end{scope}

\end{scope}
\end{tikzpicture}

\caption{The long vertical tiles in $\Gamma_{n,p,q}$.}
\label{fig:Gamma-npq-vert}
\end{figure} 

\section{The long-tile energy obstruction}\label{sec:energy}

In this section, we will show that there is no graph homomorphism from the long-tile to 31 tournaments.

\subsection{The energy function}

Let \(D\) be a digraph. An \emph{energy} function on \(D\) is a map
\[
                    \eta:A(D)\longrightarrow\{0,1\}.
\]
A \emph{directed diamond} is a quadruple \((a,b,c,d)\) satisfying
\[
       a\to b,\quad a\to c,\quad b\to d,\quad c\to d.
\]
The energy is \emph{diamond-compatible} if
\begin{equation}\label{eq:diamond}
 \eta(a,b)+\eta(b,d)=\eta(a,c)+\eta(c,d)
\end{equation}
for every directed diamond.

For \(i\in\{0,1\}\), let \(E_i\) be the spanning subdigraph formed by
the arcs of energy \(i\). We call a digraph \emph{coprime-free} if it
does not contain directed closed walks of two relatively prime lengths.
We call \(\eta\) \emph{admissible} when it is diamond-compatible and
both energy subdigraphs \(E_0,E_1\) are coprime-free.

For example, an acyclic digraph is coprime-free.  We will also use subdigraphs whose
only directed simple cycle is a triangle, i.e., every directed closed walk has length divisible
by \(3\), so they too are coprime-free.

\begin{lemma}[Square cancellation]\label{lem:square}
Let \(c\) be an oriented coloring from an oriented rectangular grid to a digraph \(D\)
carrying a diamond-compatible energy \(\eta\). Given an arc on the oriented grid \(x\to y\) we define the pulled-back energy
\(\eta(c(x),c(y))\). We define the total energy of a forward directed path to be the sum of the energies of its arc. Then any two forward directed paths on the grid with
the same endpoints have the same total energy.
\end{lemma}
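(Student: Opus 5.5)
The plan is to reduce the statement to the case of a single unit square of the grid and then use the standard fact that any two monotone lattice paths between the same endpoints are connected by a sequence of elementary square flips.

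\emph{Step 1: unit squares.} A unit square of the oriented rectangular grid has vertices $x$, $x+e_1$, $x+e_2$, $x+e_1+e_2$, with arcs
\[
x\to x+e_1,\quad x\to x+e_2,\quad x+e_1\to x+e_1+e_2,\quad x+e_2\to x+e_1+e_2 .
\]
Since $c$ is a graph homomorphism, the images satisfy $c(x)\to c(x+e_1)$, $c(x)\to c(x+e_2)$, $c(x+e_1)\to c(x+e_1+e_2)$ and $c(x+e_2)\to c(x+e_1+e_2)$ in $D$. Hence $(c(x),c(x+e_1),c(x+e_2),c(x+e_1+e_2))$ is a directed diamond in $D$. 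The definition of a directed diamond does not require its four vertices to be distinct, so there is no degeneracy issue. By \eqref{eq:diamond}, the two length-two forward paths around the square have equal pulled-back energy.

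\emph{Step 2: reduction to flips.} A forward directed path in the grid from $u$ to $v$ is a word in $e_1,e_2$. This word contains exactly $v_1-u_1$ letters $e_1$ and $v_2-u_2$ letters $e_2$. Every vertex of such a path lies in the rectangle spanned by $u$ and $v$, which is contained in the grid, so every path of this kind stays inside the grid. Any two such words are related by a finite sequence of adjacent transpositions $e_1e_2\leftrightarrow e_2e_1$, for instance by bubble-sorting both words to $e_1^{a}e_2^{b}$. Each transposition replaces one side of a unit square inside the rectangle by the other side and leaves the rest of the path unchanged. By Step~1 and additivity of the total energy, each transposition preserves the total energy. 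Induction on the number of transpositions finishes the proof.

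The only point needing care is that every intermediate path in the flip sequence lies in the grid, so that Step~1 applies to each square used. This follows because all monotone paths between $u$ and $v$ lie in the rectangle $[u_1,v_1]\times[u_2,v_2]$, which is a subgrid. I do not expect a genuine obstacle here.
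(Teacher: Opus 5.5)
Your proposal is correct and follows the paper's own argument. Each unit square maps to a directed diamond, so \eqref{eq:diamond} makes each elementary swap of a horizontal and a vertical step energy-preserving, and any two monotone paths between the same endpoints are connected by such swaps. You spell out details the paper leaves implicit, namely that degenerate diamonds are allowed and that the intermediate paths stay inside the grid.
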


\begin{proof}
Interchanging one horizontal step and one vertical step replaces one
side of a directed diamond by the other.  Equation
\eqref{eq:diamond} preserves the energy.  Any two forward paths in a
rectangle are related by a sequence of these interchanges.
\end{proof}

Intuitively, the total energy it takes to move from a vertex to another one does not depend on the choice of paths. That is why we call it energy.

\begin{proposition}[Long-tile obstruction]\label{prop:long}
If a digraph \(D\) admits an admissible energy, then
\[
                 \GammaT_{n,p,q}\not\longrightarrow D
\]
for every \(1\le n<p,q\) with \(\gcd(p,q)=1\).
\end{proposition}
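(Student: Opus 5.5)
We argue by contradiction: assume a homomorphism \(c:\GammaT_{n,p,q}\to D\) exists, where \(D\) carries an admissible energy \(\eta\), and restrict \(c\) to the long tile \(\Tcqadpa\). The tile is an oriented rectangular grid, so Lemma~\ref{lem:square} applies to the pulled-back energy. The plan is to compare total energies along the two boundary routes from the top-left corner to the bottom-right corner. Route (1) goes along the top boundary, then down the right side. Route (2) goes down the left side, then along the bottom boundary. By Lemma~\ref{lem:square} the two routes have equal total energy. By geometric fact (a) the left and right boundaries carry identical color sequences, hence identical energies. Cancelling them gives
\[
\text{energy of the top boundary}=\text{energy of the bottom boundary}.
\]

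Next we use the periodic structure. Fact (a) says the top boundary is \(q\) repetitions of a directed closed walk \(W_p\) of length \(p\) in \(D\), and the bottom is \(p\) repetitions of a directed closed walk \(W_q\) of length \(q\). The images are genuine closed walks in \(D\) because the identifications in \(\GammaT_{n,p,q}\) glue the boundary blocks. Write \(k=\eta(W_p)\in\{0,\dots,p\}\) and \(\ell=\eta(W_q)\in\{0,\dots,q\}\). The equality above becomes \(qk=p\ell\). Since \(\gcd(p,q)=1\), we get \(p\mid k\) and \(q\mid \ell\), so either \(k=0,\ \ell=0\) or \(k=p,\ \ell=q\).

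In the first case every arc of \(W_p\) and every arc of \(W_q\) has energy \(0\). Then \(E_0\) contains directed closed walks of the coprime lengths \(p\) and \(q\), contradicting that \(E_0\) is coprime-free. In the second case \(E_1\) contains both walks, which is again a contradiction. Hence no homomorphism exists.

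The main obstacle is bookkeeping, not the algebra. We must confirm from the tile's construction that the top boundary path really traverses \(q\) full copies of the \(p\)-periodic closed walk. We must also confirm that the corner \(R_\times\) blocks line up, so that the top and bottom paths start and end at the corners of the rectangle and the left and right sides genuinely coincide in color. If the corner blocks shift the period, I would instead compare the energies along rows at a fixed height. Such a row runs \(q(p)\) steps in length \(np\ldots\) modulo the common \(R_\times\) segments, and those common segments cancel on both sides. I would make this precise by choosing the row through the \(R_\times\) blocks, so that the walk from one \(R_\times\) copy to the next is exactly one period.
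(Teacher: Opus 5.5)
Your proposal is correct and follows the paper's proof: you restrict to $\Tcqadpa$, compare the two corner-to-corner boundary routes using Lemma~\ref{lem:square}, cancel the identically colored side columns to get $qE(\gamma)=pE(\delta)$, and use $\gcd(p,q)=1$ to force both closed walks into $E_0$ or both into $E_1$. The paper settles your bookkeeping worry exactly as your fallback suggests: its route (the thick lines in Figure~\ref{fig:Long}) runs through the upper-left corners of the $R_\times$ blocks, so each segment between consecutive $R_\times$ copies is exactly one period and the two side columns carry the same colors.
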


\begin{proof}
Suppose \(c:\GammaT_{n,p,q}\to D\) is a graph homomorphism. Restrict to the long tile
\(T_{c^q a=ad^p}\).  Let \(\gamma\) be the directed closed color walk
of length \(p\) occurring on each \(R_\times ,R_c\)-boundary block, and let
\(\delta\) be the corresponding directed closed walk of length \(q\)
on each \(R_\times ,R_d\)-block, and let
\(\alpha\) be the corresponding directed closed walk of length \(p\)
on each \(R_\times ,R_a\)-block.  Write \(E(\gamma)\), \(E(\delta)\) and $E(\alpha)$ for their
total energy respectively.
\begin{figure}[htpb] 
\begin{tikzpicture}
\begin{scope}[scale=1,yscale=-1]
\begin{scope}[shift={(0,0)}]  
  \foreach \loc in {(0,0.0),(1.8,0.0),(3.6,0.0),(5.4,0.0),(7.2,0.0),(10,0.0),
                    (0,1.8),(2.2,1.8),(4.4,1.8),(6.6,1.8),(10,1.8)} {
    \node[XBoxStyle] at \loc {$R_\times$};
  }
  \foreach \loc in {(0,1),(10,1)} {
    \node[ABoxStyle] at \loc {$R_a$};
  }
  \foreach \loc in {(1,0),(2.8,0),(4.6,0),(6.4,0),(9.2,0)} {
    \node[CBoxStyle] at \loc {$R_c$};
  }
  \foreach \loc in {(1,1.8),(3.2,1.8),(5.4,1.8),(7.6,1.8),(8.8,1.8)} {
    \node[DBoxStyle] at \loc {$R_d$};
  }

  \node at (8.8, 1.4) {\Large$\cdots$};

\node at (0.9,-0.2) {$\gamma$};
\node at (-0.2, 0.9) {$\alpha$};
\node at (3.3, 1.5) {$\delta$};

\draw[fill,radius=0.05,black] (0,0) circle;
\draw[fill,radius=0.05,black] (1.8,0) circle;
\draw[fill,radius=0.05,black] (3.6,0) circle;
\draw[fill,radius=0.05,black] (5.4,0) circle;
\draw[fill,radius=0.05,black] (7.2,0) circle;
\draw[fill,radius=0.05,black] (10,0) circle;
\draw[fill,radius=0.05,black] (0,1.80) circle;
\draw[fill,radius=0.05,black] (2.20,1.80) circle;
\draw[fill,radius=0.05,black] (4.40,1.80) circle;
\draw[fill,radius=0.05,black] (6.60,1.80) circle;
\draw[fill,radius=0.05,black] (10,1.80) circle;

 \draw[draw=black, ultra thick] (0,0) to (10,0);
 \draw[draw=black, ultra thick] (0,1.80) to (10,1.80);
 \draw[draw=black, ultra thick] (0,0) to (0,1.80);
 \draw[draw=black, ultra thick] (10,0) to (10,1.80);

 \fill[fill=white] (8,-.1) rectangle (9.6, 2.9);



\end{scope}

\end{scope}
\end{tikzpicture}
\caption{\label{fig:Long} The long horizontal tile $\Tcqadpa$. Consider two forward paths indicated by the thick line.}
\end{figure}

Apply Lemma \ref{lem:square} to the two boundary routes from the
upper-left corner to the lower-right corner, we obtain $qE(\gamma)+E(\alpha)=pE(\delta)+E(\alpha)$. The equal side paths
cancel, giving
\begin{equation}\label{eq:long}
                        qE(\gamma)=pE(\delta).
\end{equation}
Since \(\gcd(p,q)=1\), equation \eqref{eq:long} implies
\(p\mid E(\gamma)\) and \(q\mid E(\delta)\).  But
\[
              0\le E(\gamma)\le p,\qquad
              0\le E(\delta)\le q.
\]
Hence either
\[
        E(\gamma)=E(\delta)=0
\]
or
\[
        E(\gamma)=p,\qquad E(\delta)=q.
\]
In the first case \(E_0\) contains directed closed walks of the
relatively prime lengths \(p,q\); in the second case the same is true
of \(E_1\).  Both alternatives contradict admissibility.
\end{proof}

\begin{remark}
    For the digraph $D$ with $V(D)=\mathbb Z_n,n>2$ and $$(i,j)\in A(D)\Longleftrightarrow j-i\equiv1\text{ or }2(\bmod n)$$ there is an admissible energy function $\eta:A(D)\to\{0,1\}$
    \[
 \eta(i,j)=
 \begin{cases}
 0,&j-i\equiv 2(\bmod n),\\
 1,&j-i\equiv 1(\bmod n).
 \end{cases}
\]
    When $n=3$ the above digraph is the undirected complete graph on 3 vertices $K_3$, which is a directed graph with 6 arcs. So the continuous chromatic number of undirected Schreier graph is greater than 3, this extends \cite[Theorem 1.5.1]{GJKS2025} and \cite[Theorem 3.1.1]{GJKS2025}. 
\end{remark}
From now on, we will focus on graph homomorphism to tournaments. Since the digraph $\GammaT_{n,p,q}$ is strongly connected, the image of it lies in a strong connected component, so we will focus on strong tournaments.
\subsection{27 strong tournaments with the order energy}

The majority of the finite tournament classification is handled by one
simple criterion. Identify the vertex set $V(T)$ with the order
\[
                         0<1<\cdots<m-1.
\]
A tournament is determined by its set \(B\) of backward arcs
\(j\to i\) with \(j>i\), and every pair not in \(B\) is oriented forward.
Define the order energy for every $(u,v)\in A(T)$,
\[
 \eta(u,v)=
 \begin{cases}
 0,&u<v,\\
 1,&u>v.
 \end{cases}
\]
That is, an arc has zero energy if and only if it is forward. Both energy subdigraphs are acyclic since energy-zero arcs move forward in
the order and energy-one arcs move backward. So both energy subdigraphs are coprime-free.

For distinct \(a,d\), put
\[
        I(a,d)=N^+(a)\cap N^-(d),
\]
the set of middle vertices of a directed two-step path
\(a\to x\to d\).

\begin{lemma}[Interval criterion]\label{lem:interval}
The order energy is diamond-compatible if and only if, for every ordered
pair \(a,d\), the set \(I(a,d)\) lies entirely inside the open interval
between \(a\) and \(d\), or entirely outside that interval.
\end{lemma}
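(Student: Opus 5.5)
The plan is to unwind diamond-compatibility for the order energy directly. A directed diamond $(a,b,c,d)$ in $T$ is a pair of two-step paths $a\to b\to d$ and $a\to c\to d$ with the same endpoints. Its middle vertices are exactly two elements $b,c\in I(a,d)$. Note that $a\neq d$, since $T$ is an oriented graph and $a\to b\to a$ is impossible; so $I(a,d)$ is only considered for distinct $a,d$. I will allow $b=c$, which is trivially fine. Condition \eqref{eq:diamond} then says that the function
\[
 f_{a,d}(x)=\eta(a,x)+\eta(x,d),\qquad x\in I(a,d),
\]
is constant on $I(a,d)$ for every ordered pair $(a,d)$. So the lemma reduces to computing $f_{a,d}(x)$ in terms of the position of $x$ relative to $a$ and $d$.

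Assume first $a<d$. There are three positions for $x\neq a,d$:
\begin{itemize}
\item if $x<a$, then $\eta(a,x)=1$ and $\eta(x,d)=0$, so $f_{a,d}(x)=1$;
\item if $a<x<d$, then both arcs are forward and $f_{a,d}(x)=0$;
\item if $x>d$, then $\eta(a,x)=0$ and $\eta(x,d)=1$, so $f_{a,d}(x)=1$.
\end{itemize}
Thus $f_{a,d}(x)=0$ when $x$ lies in the open interval between $a$ and $d$, and $f_{a,d}(x)=1$ when it lies outside. Symmetrically, for $a>d$: an inside $x$ gives two backward arcs, so the value is $2$, while an outside $x$ gives one forward and one backward arc, so the value is $1$. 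In both cases $f_{a,d}$ takes one value on $I(a,d)$ inside the interval and a different value outside.

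Both directions now follow.
\begin{itemize}
\item \emph{If the criterion holds}, then for each $(a,d)$ the set $I(a,d)$ is entirely inside or entirely outside the interval. Hence $f_{a,d}$ is constant on it, and every diamond satisfies \eqref{eq:diamond}.
\item \emph{Conversely}, if some $I(a,d)$ contains $b$ inside and $c$ outside, then $(a,b,c,d)$ is a directed diamond. The case computation gives $f_{a,d}(b)\ne f_{a,d}(c)$, so \eqref{eq:diamond} fails.
\end{itemize}

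The only point needing care is the bookkeeping of the two orientations $a<d$ and $a>d$. Since the vertex set is totally ordered and $x\ne a,d$, the three positional cases cover all possibilities, so I do not expect a genuine obstacle.
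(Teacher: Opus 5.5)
Your proof is correct and complete. For the order energy, diamond-compatibility is exactly the constancy of $x\mapsto\eta(a,x)+\eta(x,d)$ on each $I(a,d)$. Your case analysis then settles both directions: the value is $0$ inside versus $1$ outside when $a<d$, and $2$ inside versus $1$ outside when $a>d$.

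The paper states this lemma without proof, so your argument supplies the routine verification it leaves implicit. It also shows why the certificates in Appendix~\ref{app:orders} only need to list pairs with $\lvert I(a,d)\rvert\ge 2$.
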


\begin{corollary}\label{cor:order}
The order energy on those tournaments satisfying the interval criterion
is admissible.
\end{corollary}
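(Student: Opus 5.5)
The corollary only needs the two halves of admissibility to be checked, and both are already available. Diamond-compatibility comes from Lemma \ref{lem:interval}. Coprime-freeness of the energy subdigraphs comes from the acyclicity observation made just before the lemma. So I would prove the corollary by assembling these two facts, and by giving a short argument for the direction of Lemma \ref{lem:interval} that is used, in case it is to be treated as given only in statement.

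\emph{Step 1: coprime-freeness.} The arcs of $E_0$ all go from a smaller vertex to a larger vertex in the fixed order $0<1<\cdots<m-1$. Along any directed walk in $E_0$ the labels therefore strictly increase, so $E_0$ has no directed closed walk. The same holds for $E_1$ with labels strictly decreasing. A digraph with no closed walks trivially cannot have closed walks of two coprime lengths. Hence both $E_0$ and $E_1$ are coprime-free.

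\emph{Step 2: diamond-compatibility.} Let $(a,b,c,d)$ be a directed diamond. Then $b$ and $c$ both lie in $I(a,d)$, and I will argue that the criterion makes the energies of the two sides equal.

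Suppose first that $x\in I(a,d)$ lies strictly between $a$ and $d$. Then the path $a\to x\to d$ is monotone. Its energy is $0$ if $a<d$, and $2$ if $a>d$.

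Suppose instead that $x$ lies outside the closed interval between $a$ and $d$. Then exactly one of the two steps is forward and the other is backward, so the energy is $1$.

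The endpoints $x=a$ and $x=d$ are excluded, since the tournament has no loops. Therefore $\eta(a,x)+\eta(x,d)$ depends only on which side of the interval $x$ lies. The interval criterion says that $b$ and $c$ lie on the same side, so equation \eqref{eq:diamond} holds. The case $a=d$ cannot occur in an oriented graph, because it would force the 2-cycle $a\to b\to a$.

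\emph{Step 3: conclusion.} Combining the two steps, for every tournament satisfying the interval criterion the order energy is both diamond-compatible and coprime-free, that is, admissible. Proposition \ref{prop:long} then applies to these tournaments.

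None of these steps is a genuine obstacle. The only point needing care is the case analysis in Step 2: one must check that the two-step energy is exactly determined by inside versus outside the interval, with the endpoints excluded automatically.
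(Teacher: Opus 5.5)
Your proof is correct and takes essentially the paper's route: the corollary comes from combining the acyclicity remark made just before Lemma \ref{lem:interval} (giving coprime-freeness) with that lemma (giving diamond-compatibility). Your Step 2 goes slightly further by verifying the sufficiency direction of the interval criterion, which the paper states without proof; your case analysis (energy $0$ or $2$ inside the open interval, $1$ outside it, with endpoints and $a=d$ excluded) is sound.
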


 We show that there are 27 strong tournaments from $T_1$ to $T_{27}$ satisfying the interval criterion, see Appendix \ref{app:orders} for the specific certificate. All remaining pairs are vacuous or have a single intermediate vertex.

\subsection{4 strong tournaments with non-order energy}

We show that there are 4 more tournaments admitting an admissible energy function. We list them as follows.

\begin{table}[ht]
\centering
\small
\begin{tabular}{@{}ccllc@{}}
\toprule
case&backward set \(B\)&energy-one arcs&only cycle in \(E_0\)&code\\
\midrule
\(T_{28}\)&\(\{42,50\}\)&\(\{50\}\)&\(2\to3\to4\to2\)&80\\
\(T_{29}\)&\(\{42,50,51\}\)&\(\{50,51\}\)&\(2\to3\to4\to2\)&88\\
\(T_{30}\)&\(\{31,50\}\)&\(\{50\}\)&\(1\to2\to3\to1\)&176\\
\(T_{31}\)&\(\{31,40,50\}\)&\(\{01,02,03\}\)&\(1\to2\to3\to1\)&377\\
\bottomrule
\end{tabular}
\caption{The four non-order energy certificates.  All unlisted arcs
have energy \(0\), and \(E_1\) is acyclic.}
\label{tab:special}
\end{table}

\begin{proposition}\label{prop:31}
Each of the 4 tournaments represented in Table \ref{tab:special} admits an admissible energy.
\end{proposition}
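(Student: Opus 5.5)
The plan is to exploit a common structural feature of the four certificates in Table \ref{tab:special}. In each case the set of energy-one arcs is exactly the out-star of a single vertex $v$. For $T_{28}$, $T_{29}$, $T_{30}$ we take $v=5$; reading off $B$, the out-neighbourhood of $5$ is $\{0\}$, $\{0,1\}$, $\{0\}$ respectively. For $T_{31}$ we take $v=0$, with $N^+(0)=\{1,2,3\}$ because $40,50\in B$. I would first verify this claim directly from the backward sets. Admissibility then splits into three checks: $E_1$ is acyclic, $E_0$ is coprime-free, and $\eta$ is diamond-compatible.

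The first two checks are immediate. $E_1$ is a star directed out of $v$, so it contains no directed cycle. For $E_0$: in $T_{28},T_{29}$ the only backward arc not of energy one is $4\to 2$. Any directed cycle in $E_0$ must use it, and must then return from $2$ to $4$ along forward arcs. Since $2\to 4$ is absent, the only such return is $2\to3\to4$, so the triangle $2\to3\to4\to2$ is the unique simple cycle. The same argument applies to $T_{30}$ with the backward arc $3\to1$. For $T_{31}$, the energy-zero backward arcs are $3\to1$, $4\to0$ and $5\to0$. Vertex $0$ has no energy-zero out-arcs, so no cycle of $E_0$ passes through $0$, which leaves only $3\to1$ and again the triangle $1\to2\to3\to1$. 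In every case each closed walk of $E_0$ therefore has length divisible by $3$, so $E_0$ is coprime-free by the remark preceding Lemma \ref{lem:square}.

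For diamond compatibility I would use the following observation. Since the energy-one arcs are exactly the arcs leaving $v$, the two-step path $a\to x\to d$ has energy $[a=v]+[x=v]$. Two paths through the same pair $(a,d)$ can therefore have different energies only if $v\in I(a,d)$ and $|I(a,d)|\ge 2$. Thus it suffices to show that $v\in I(a,d)$ forces $I(a,d)=\{v\}$. For $T_{28},T_{30}$ this holds because $v\in I(a,d)$ forces $d=0$, and $N^-(0)=\{5\}$. For $T_{29}$ we get $d\in\{0,1\}$. Here $N^-(0)=\{5\}$, and $N^-(1)=\{0,5\}$; the second case requires $a\to 0$, hence $a=5=v$, which is impossible when $v$ is the middle vertex. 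For $T_{31}$ we get $a\in N^-(0)=\{4,5\}$. I would compute $N^+(4)=\{0,5\}$ and $N^+(5)=\{0\}$. Since $5\notin N^-(d)$ for $d\in\{1,2,3\}$, in both cases $I(a,d)\subseteq\{0\}$.

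The main obstacle is this last neighbourhood bookkeeping, especially for $T_{31}$, where the special vertex has out-degree $3$. Everything there reduces to correctly reading the orientations from $B$, so I would double-check each out-neighbourhood against the score sequences listed in the table of strong tournaments.
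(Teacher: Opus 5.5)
Your proposal is correct and follows essentially the paper's route. Both arguments use neighbourhood bookkeeping to show that every diamond through an energy-one arc is degenerate or balanced, observe that $E_1$ is visibly acyclic, and identify the triangle as the only simple cycle of $E_0$. Your out-star reformulation (path energy $[a=v]+[x=v]$, so it suffices that $v\in I(a,d)$ forces $I(a,d)=\{v\}$) packages the paper's case analysis uniformly. It also absorbs the balanced diamonds with $a=v$, which the paper lists separately. Your cycle argument via the unique energy-zero backward arc replaces the paper's elimination of vertices from strong components. For $T_{29}$ you correctly use $N^-(1)=\{0,5\}$, whereas the paper's diamond step misstates $N^-(1)=\{5\}$; its conclusion is unaffected.
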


\begin{proof}

For \(T_{28}\), $N^+(5)=\{0\}$ and $N^-(0)=\{5\}$, so the only energy-one arc \(5\to0\) only occurs in directed diamond $(a,b,c,d)$ with $b=c$.  For \(T_{30}\) the same statement holds.  Hence these two tournaments are diamond-compatible.

For \(T_{29}\), $N^-(0)=N^-(1)=\{5\}$, so the only diamonds meeting an energy-one arc have the form either $(a,b,c,d)\quad b=c$ or
\[
             (5,0,1,d),\qquad d\in\{2,3,4\},
\]
up to interchanging the two middle vertices.  Each of their two
length-two paths contains exactly one energy-one arc.

For \(T_{31}\), $N^-(0)=\{4,5\}$ and $N^+(5)=\{0\},N^+(4)=\{0,5\}$, so the only diamonds meeting an energy-one arc have the form either $(a,b,c,d)\quad b=c$ or vertex \(a=0\), middle vertices \( b,c\in \{1,2,3\}\), and vertex \(d\in\{4,5\}\). Again each path
contains exactly one energy-one arc.

Thus all four energies are diamond-compatible.  In each row \(E_1\) is
visibly acyclic.

For \(T_{28}\), $N^+(5)=\{0\}$ and $N^-(0)=\{5\}$, and the arc $5\to 0$ has energy-one, so $0$ and $5$ are not in any strongly connected component of $E_0$. And $N^-(1)=\{0\}$, so $1$ is not in any strongly connected component of $E_0$.

For \(T_{29}\), $N^+(5)=\{0,1\}$ and $N^-(0)=\{5\}$, and the arcs $5\to 0$ and $5\to 1$ have energy-one, so $0$ and $5$ are not in any strongly connected component of $E_0$. And $N^-(1)=\{0,5\}$, so $1$ is not in any strongly connected component of $E_0$.

For \(T_{30}\), $N^+(5)=\{0\}$ and $N^-(0)=\{5\}$, and the arc $5\to 0$ has energy-one, so $0$ and $5$ are not in any strongly connected component of $E_0$. And $N^+(4)=\{5\}$, so $4$ is not in any strongly connected component of $E_0$.

For \(T_{31}\), $N^+(0)=\{1,2,3\}$, and the arcs $0\to 1,0\to 2,0\to 3$ have energy-one, so $0$ is not in any strongly connected component of $E_0$. And $N^+(5)=\{0\}$, so $5$ is not in any strongly connected component of $E_0$. And $N^+(4)=\{0,5\}$, so $4$ is not in any strongly connected component of $E_0$.

The triangle shown in the table is its only directed simple cycle.  Hence every closed walk in \(E_0\)
has length divisible by \(3\).  Both energy subdigraphs are therefore
coprime-free for each tournament. So each energy function in the list is admissible.
\end{proof}

\begin{corollary}\label{cor:31blocked}
None of these thirty-one strong six-vertex tournaments is a homomorphic
image of any \(\GammaT_{n,p,q}\) with coprime \(p,q\).
\end{corollary}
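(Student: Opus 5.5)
The corollary follows by combining Proposition \ref{prop:long} with the admissibility results already obtained for the thirty-one tournaments \(T_1,\dots,T_{31}\). The plan is to check, for each \(i\le 31\), that \(T_i\) admits an admissible energy, and then to invoke Proposition \ref{prop:long} with \(D=T_i\). That proposition rules out a graph homomorphism \(\GammaT_{n,p,q}\to T_i\) for every \(1\le n<p,q\) with \(\gcd(p,q)=1\).

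The thirty-one cases split into two groups.

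\emph{The order-energy group, \(T_1,\dots,T_{27}\).} Here I use Corollary \ref{cor:order}, so it suffices to verify the interval criterion of Lemma \ref{lem:interval} for the labelings fixed by the backward-arc sets \(B\) in the table. For each ordered pair \((a,d)\), I compute
\[
I(a,d)=N^+(a)\cap N^-(d)
\]
and check whether it lies entirely inside the open interval between \(a\) and \(d\) or entirely outside it. When \(|I(a,d)|\le1\) the condition holds automatically, so only pairs with at least two middle vertices need checking. This is a finite check, recorded in Appendix \ref{app:orders}.

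It also explains why the criterion gives diamond-compatibility. For a diamond \((a,b,c,d)\), the energy of the path \(a\to b\to d\) is the number of backward steps. Under the order energy this depends only on where \(b\) sits relative to \(a\) and \(d\): inside the interval, below both, or above both. The criterion forces the two paths of every diamond to have the same count.

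\emph{The non-order group, \(T_{28},\dots,T_{31}\).} Admissibility of these energies is exactly Proposition \ref{prop:31}, which uses the energies of Table \ref{tab:special}.

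In every case the energy subdigraphs are coprime-free. \(E_0\) and \(E_1\) are acyclic for the order energy. For the special energies, \(E_1\) is acyclic and every closed walk in \(E_0\) has length divisible by \(3\).

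The step I expect to be the main obstacle is the finite verification of the interval criterion for all 27 order-energy tournaments. Its correctness depends on choosing, for each isomorphism class, a labeling under which the criterion holds. The canonical labeling need not work a priori, and the specific labelings listed in the table must be the ones certified in Appendix \ref{app:orders}.

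Given those certificates and Proposition \ref{prop:31}, the conclusion follows directly. Every one of the 31 tournaments carries an admissible energy, so Proposition \ref{prop:long} yields \(\GammaT_{n,p,q}\not\to T_i\) for all \(i\le 31\) and all coprime \(p,q>n\).
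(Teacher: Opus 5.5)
Your proposal is correct and follows the paper's route exactly. You apply Proposition~\ref{prop:long} to each $T_i$, taking admissibility from Corollary~\ref{cor:order} and the interval certificates of Appendix~\ref{app:orders} for $T_1,\dots,T_{27}$, and from Proposition~\ref{prop:31} for $T_{28},\dots,T_{31}$. Your justification of the interval criterion is also right: the order energy of $a\to b\to d$ depends only on whether $b$ lies inside or outside the interval between $a$ and $d$.
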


\section{The torus-tile marker obstruction}\label{sec:markers}

In this section, we will show that there is no graph homomorphism from the torus digraph $\Cp_p\square\Cp_q$ to the remaining tournaments. The four remaining strong six-vertex tournaments form two pairs under
reversal.  Representatives \(T_A=T_{32},T_B=T_{33}\) are specified by their
out-neighborhoods:
\begin{equation}\label{eq:TA}
\begin{aligned}
N^+_{T_A}(0)&=\{2,5\},&
N^+_{T_A}(1)&=\{0\},&
N^+_{T_A}(2)&=\{1\},\\
N^+_{T_A}(3)&=\{0,1,2\},&
N^+_{T_A}(4)&=\{0,1,2,3\},&
N^+_{T_A}(5)&=\{1,2,3,4\},
\end{aligned}
\end{equation}
and
\begin{equation}\label{eq:TB}
\begin{aligned}
N^+_{T_B}(0)&=\{1,4,5\},&
N^+_{T_B}(1)&=\{3\},&
N^+_{T_B}(2)&=\{0,1\},\\
N^+_{T_B}(3)&=\{0,2\},&
N^+_{T_B}(4)&=\{1,2,3\},&
N^+_{T_B}(5)&=\{1,2,3,4\}.
\end{aligned}
\end{equation}
The other two classes are \(T_A^{\mathrm{op}}=T_{34}\) the reversal digraph of $T_A$ and
\(T_B^{\mathrm{op}}=T_{35}\) the reversal digraph of $T_B$.

For a tournament \(T\), let
\[
   \PathTwo(T)=\{(x,y,z):x\to y\to z\}
\]
be the set of directed paths of length two.

For \(T_A\), define the marked triples
\begin{equation}\label{eq:MA}
\begin{aligned}
\mathcal M_{T_A}=\{&
(0,2,1),(0,5,1),(0,5,2),(0,5,3),(0,5,4),\\
&(3,2,1),(4,2,1),(4,3,1)\}\subseteq\PathTwo(T_A).
\end{aligned}
\end{equation}
For \(T_B\), it is shorter to list the unmarked triples:
\begin{equation}\label{eq:UB}
\begin{aligned}
\mathcal U_{T_B}={}&
 \{(x,0,z):x\in\{2,3\},\ z\in\{1,4,5\}\}\\
&{}\cup\{(x,2,1):x\in\{3,4,5\}\}
 \cup\{(5,4,1)\}\subseteq\PathTwo(T_B),
\end{aligned}
\end{equation}
and put
\[
                    \mathcal M_{T_B}=\PathTwo(T_B)\setminus\mathcal U_{T_B}.
\]

If
\[
 r=(r_i)_{i\in\mathbb Z/p\mathbb Z}
\]
is a directed closed walk of length $p$ in \(T=T_A\) or \(T_B\), define its marker set
\[
 M(r)=\{i:(r_i,r_{i+1},r_{i+2})\in\mathcal M_T\}.
\]

\subsection{The local transfer law}

\begin{lemma}[Marker shift]\label{lem:shift}
Let \(T\in\{T_A,T_B\}\).  Suppose \(r=(r_i)\) and \(s=(s_i)\) are
directed rows satisfying
\[
       r_i\to r_{i+1},\qquad s_i\to s_{i+1},
       \qquad r_i\to s_i
\]
for every \(i\).  Then
\[
                         M(s)=M(r)-1.
\]
Equivalently,
\[
       (s_i,s_{i+1},s_{i+2})\in\mathcal M_T
       \quad\Longleftrightarrow\quad
       (r_{i+1},r_{i+2},r_{i+3})\in\mathcal M_T.
\]
\end{lemma}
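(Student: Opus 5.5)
The statement is local: the equivalence at index $i$ involves only the window
\[
 r_{i+1},r_{i+2},r_{i+3},\qquad s_i,s_{i+1},s_{i+2},
\]
together with the arcs $r_j\to r_{j+1}$, $s_j\to s_{j+1}$ and $r_j\to s_j$ among them. The identity $M(s)=M(r)-1$ is just the equivalence read at every $i$, so it suffices to prove the displayed local equivalence. The hypotheses do not involve $s_{i+3}$ or $r_i$ in the right places, so I would include them to get every constraint the configuration actually imposes. Concretely, the relevant configuration is a $2\times 4$ directed ladder: columns $j=i,\dots,i+3$, rows $r$ and $s$, horizontal arcs within rows and vertical arcs $r_j\to s_j$. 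Since the marker of $s$ at $i$ only sees columns $i,i+1,i+2$ and the marker of $r$ at $i+1$ only sees columns $i+1,i+2,i+3$, a first attempt would use the smaller ladders.

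The key reduction is that the truth of the equivalence depends only on the homomorphic images of such ladders in $T$. So I would enumerate all homomorphisms of the $2\times4$ directed ladder into $T_A$, and likewise into $T_B$, using the explicit out-neighbourhoods \eqref{eq:TA} and \eqref{eq:TB}, and check the equivalence for each one. I would organise the enumeration by the middle column $(r_{i+1},s_{i+1})$ and $(r_{i+2},s_{i+2})$, which is a directed square $r_{i+1}\to r_{i+2}\to s_{i+2}$, $r_{i+1}\to s_{i+1}\to s_{i+2}$, that is, a directed diamond in $T$. The diamonds of a 6-vertex tournament with these small out-degrees are few. Vertices $1$ and $2$ in $T_A$ have out-degree $1$, and vertices $1,2,3$ in $T_B$ have out-degree $\le 2$, which severely restricts extensions. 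For each diamond I would then list the possible $s_i$, namely common in-neighbours of $s_{i+1}$ that are out-neighbours of some $r_i$, and the possible $r_{i+3}$, namely out-neighbours of $r_{i+2}$ that have an arc to some $s_{i+3}$ out of $s_{i+2}$. This yields two finite lists of triples $(s_i,s_{i+1},s_{i+2})$ and $(r_{i+1},r_{i+2},r_{i+3})$ attached to each diamond.

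The proof then amounts to showing that, for each diamond, membership in $\mathcal M_T$ is constant on both lists and equal on the two lists. Equivalently, $\mathcal M_T$ was chosen as a union of classes of the relation on $\PathTwo(T)$ generated by ``$s$-triple at $i$ co-occurs with $r$-triple at $i+1$.'' I would present this as a table, in the spirit of the appendices, with one row per realisable diamond, listing the achievable triples on each side and their marker status. Where useful, I would use the degree constraints to prune: for instance, $N^+_{T_A}(1)=\{0\}$ forces the successor of $1$ in any row.

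The main obstacle is the bookkeeping of this case check. In particular, I need to confirm that requiring a full $2\times4$ window, rather than a $2\times3$ one, is what makes the equivalence hold, since some triples may only be ruled out by extendability of the ladder. If a case fails on the finite window, I would next use that rows are closed walks, so that every vertex has a predecessor and successor in both rows. That lets me prune non-extendable configurations by an iterated arc-consistency pass on the infinite ladder before checking the equivalence.
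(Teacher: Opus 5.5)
Your proposal is correct and is essentially the paper's argument. Both reduce the lemma to a finite check over homomorphic images of the directed $2\times4$ rectangle: the paper organizes the check by transfer classes of $\PathTwo(T)$ ($\mathcal M_{T_A},\mathcal A_0,\mathcal A_-$ for $T_A$ and $\mathcal M_{T_B},\mathcal U_{T_B}$ for $T_B$, each preserved from the lower triple at $i$ to the upper triple at $i+1$), while you organize it by the middle diamond. The full window is exactly what is needed. Both corners $r_i$ and $s_{i+3}$ matter: in $T_A$, dropping $s_{i+3}$ allows $(s_i,s_{i+1},s_{i+2})=(2,1,0)$ with $(r_{i+1},r_{i+2},r_{i+3})=(4,3,1)$, and dropping $r_i$ allows $(0,2,1)$ with $(5,2,1)$. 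But the $2\times4$ window also suffices, so your closed-walk pruning is never required.
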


\begin{proof}
This is a local statement about a directed \(2\times4\) rectangle:
\[
\begin{array}{ccccccc}
r_i&\to&r_{i+1}&\to&r_{i+2}&\to&r_{i+3}\\
\downarrow&&\downarrow&&\downarrow&&\downarrow\\
s_i&\to&s_{i+1}&\to&s_{i+2}&\to&s_{i+3}.
\end{array}
\]

For \(T_A\), one can use the out-neighborhood list (\ref{eq:TA}) to show that the number of $\mathcal P_2(T_A)$ is 28. $\mathcal P_2(T_A)$ splits into the following three
transfer classes:
\[
\mathcal M_A,
\]
\[
\begin{aligned}
\mathcal A_0={}&
 \{(x,0,z):x\in\{1,3,4\},\ z\in\{2,5\}\}\\
 &{}\cup\{(4,3,2)\},
\end{aligned}
\]
and
\[
\begin{aligned}
\mathcal A_-={}&
 \{(2,1,0),(3,1,0),(4,1,0),(4,3,0),(5,1,0),(5,2,1)\}\\
&{}\cup\{(5,3,z):z\in\{0,1,2\}\}\\
&{}\cup\{(5,4,z):z\in\{0,1,2,3\}\}.
\end{aligned}
\]
These sets have sizes \(8,7,13\), respectively, and exhaust the
28 directed triples of \(T_A\).

For \(T_B\), there are only two transfer classes:
\[
                         \mathcal M_B
                         \quad\text{and}\quad
                         \mathcal U_B.
\]
Using the out-neighborhood lists \eqref{eq:TA} and \eqref{eq:TB}, one
checks a possible rectangle column by column: the lower triple and the
shifted upper triple always remain in the same displayed class. The verification is tedious like playing a Sudoku. This
completes the proof of local transfer law.
\end{proof}

\subsection{Every row has both types}

\begin{lemma}[Row nontriviality]\label{lem:row}
For \(T\in\{T_A,T_B\}\), every directed closed walk \(r\) of length $p$ satisfies
\[
                   \varnothing\ne M(r)\ne\mathbb Z/p\mathbb Z.
\]
\end{lemma}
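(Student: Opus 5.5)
The plan is to treat both inclusions $\varnothing\ne M(r)$ and $M(r)\ne\mathbb Z/p\mathbb Z$ as finite combinatorial facts about the out-neighbourhood lists \eqref{eq:TA} and \eqref{eq:TB}. The idea is to show two things for each tournament: that certain vertices are unavoidable on any directed closed walk, and that the successor structure of directed triples rules out long runs of one type. Throughout I use $p\ge 3$. The cyclic index means $(r_i,r_{i+1},r_{i+2})$ is followed by $(r_{i+1},r_{i+2},r_{i+3})$.

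\emph{Case $T_A$.} First I show every directed closed walk visits $0$. If it avoids $0$, it avoids $1$, since $N^+(1)=\{0\}$. It then avoids $2$, since $N^+(2)=\{1\}$. On $\{3,4,5\}$ the vertex $3$ has no out-neighbour, so the walk avoids $3$; then $4$ has none either, and finally $5$ is isolated, which is impossible. So some $r_i=0$. Then $r_{i+1}\in\{2,5\}$. If $r_{i+1}=2$, then $r_{i+2}=1$ and $(0,2,1)\in\mathcal M_{T_A}$. If $r_{i+1}=5$, then $(0,5,r_{i+2})$ is marked for every possible $r_{i+2}\in\{1,2,3,4\}$. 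Hence $M(r)\ne\varnothing$.

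For $M(r)\ne\mathbb Z/p\mathbb Z$, I check that no marked triple is followed by a marked triple. The successors of $(0,2,1),(3,2,1),(4,2,1)$ are $(2,1,0)$. The successor of $(4,3,1)$ is $(3,1,0)$. The successors of $(0,5,z)$ start with $5$, and no marked triple starts with $5$. Thus two consecutive indices cannot both lie in $M(r)$, so $M(r)$ is proper.

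\emph{Case $T_B$.} The roles swap. No two consecutive triples are unmarked. Indeed, after $(x,0,z)$ comes $(0,z,w)$, but no unmarked triple starts with $0$. After $(x,2,1)$ comes $(2,1,3)$, and after $(5,4,1)$ comes $(4,1,3)$; neither is in $\mathcal U_{T_B}$. Hence $M(r)\ne\varnothing$.

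For properness, I find an unmarked triple on every closed walk, again by an avoidance argument.

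\begin{enumerate}[label=\textup{(\arabic*)}]
\item Suppose the walk visits $3$, say $r_i=3$. If $r_{i+1}=0$, then $(3,0,r_{i+2})$ is unmarked, since $r_{i+2}\in\{1,4,5\}$. If $r_{i+1}=2$, then either $r_{i+2}=1$, giving the unmarked triple $(3,2,1)$, or $r_{i+2}=0$, giving the unmarked triple $(2,0,r_{i+3})$.
\item Suppose the walk avoids $3$. Then it avoids $1$, since $N^+(1)=\{3\}$. If it visits $2$, it must go $2\to0\to z$ with $z\in\{4,5\}$, which is unmarked. If it avoids $2$ as well, it lives on $\{0,4,5\}$. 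There $4$ has no out-neighbour, so the walk avoids $4$, and then $\{0,5\}$ carries no closed walk.
\end{enumerate}

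So every closed walk contains an unmarked triple.

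The main obstacle is simply bookkeeping: making sure the marked and unmarked lists are read correctly against \eqref{eq:TA}, \eqref{eq:TB}. For example, each triple named above must be a genuine directed path, such as $2\to0$ in $T_B$ and $0\to2$ in $T_A$. A mechanical alternative, if any step fails, is to form the line digraph on $\PathTwo(T)$ and verify that its restrictions to $\mathcal M_T$ and to its complement are both acyclic.
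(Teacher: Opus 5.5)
Your proof is correct, but for two of the four cases it takes a different route from the paper. The paper handles all four statements the same way. It forms the de Bruijn digraphs $D_\epsilon(T)$ on $A(T)$, with arcs $(x,y)\to(y,z)$ for triples of marker value $\epsilon$, and certifies each one acyclic by an explicit level partition of the arcs that strictly increases along de Bruijn arcs.

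Two of your arguments are the same idea in sharper form. ``No two consecutive marked triples'' in $T_A$ and ``no two consecutive unmarked triples'' in $T_B$ say exactly that $D_1(T_A)$ and $D_0(T_B)$ have no directed path of length two. This is what the paper's two-level certificates encode.

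For the other two cases, where the paper needs four and five levels, you argue on the vertices of $T$ itself.
\begin{enumerate}[label=\textup{(\roman*)}]
\item In $T_A$: $T_A-0$ is the transitive tournament $5\to4\to3\to2\to1$, so every closed walk passes through $0$, and every directed triple starting at $0$ is marked.
\item In $T_B$: a visit to $3$ forces one of the unmarked triples $(3,0,\cdot)$, $(3,2,1)$ or $(2,0,\cdot)$ at $r_i$ or $r_{i+1}$. A walk avoiding $3$ also avoids $1$. Since $\{0,4,5\}$ induces an acyclic subtournament, such a walk must use $2\to0$, which again gives an unmarked triple.
\end{enumerate}

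Your version is shorter to verify by hand and explains why the marker sets work: there is an unavoidable vertex or arc whose triples all carry one marker value. It also gives a little more, for example that marked indices of a $T_A$-row are never adjacent. The paper's version is uniform and purely mechanical, with no case analysis, but offers little insight. I checked every neighbourhood and membership claim you use against \eqref{eq:TA}, \eqref{eq:TB}, \eqref{eq:MA} and \eqref{eq:UB}, and they are all correct.
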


\begin{proof}
For \(\epsilon\in\{0,1\}\), form a de Bruijn digraph \(D_\epsilon(T)\).
Its vertices \(V(D_\epsilon(T))=A(T)\), and it has an arc
\[
                  (x,y)\longrightarrow(y,z)
\]
when \((x,y,z)\) is a directed path whose marker value is
\(\epsilon\). $\epsilon=1$ if $(x,y,z)\in\mathcal{M}_T$ and $\epsilon=0$ if $(x,y,z)\notin\mathcal{M}_T$. We claim that both $D_\epsilon(T)$ are acyclic. Then, the directed closed walk $r$ is not fully marked $M(r)\neq\mathbb Z/ p\mathbb Z$, since $D_1(T)$ is acyclic; the directed closed walk $r$ is not fully unmarked $M(r)\neq\varnothing$ since $D_0(T)$ is acyclic.

For each marker value $\epsilon$, the following level sets are a partition of the vertex set for the corresponding
de Bruijn digraph $D_\epsilon(T)$. One can verify that every de Bruijn arc goes from a lower-numbered level
to a higher-numbered level, and the highest-numbered level is contained in the set of sinks.

For marked triples of \(T_A\):
\[
\begin{aligned}
L_0={}&\{02,05,10,30,32,40,41,42,43\},\\
L_1={}&\{21,31,51,52,53,54\}.
\end{aligned}
\]
For unmarked triples of \(T_A\):
\[
\begin{aligned}
L_0={}&\{51,52,53,54\},\\
L_1={}&\{21,31,40,41,42,43\},\\
L_2={}&\{10,30,32\},\\
L_3={}&\{02,05\}.
\end{aligned}
\]
For marked triples of \(T_B\):
\[
\begin{aligned}
L_0={}&\{01,04,05,21\},\\
L_1={}&\{41,51,52,53,54\},\\
L_2={}&\{13,42,43\},\\
L_3={}&\{30,32\},\\
L_4={}&\{20\}.
\end{aligned}
\]
For unmarked triples of \(T_B\):
\[
\begin{aligned}
L_0={}&\{13,20,30,32,42,43,51,52,53,54\},\\
L_1={}&\{01,04,05,21,41\}.
\end{aligned}
\]
This completes the proof of claim.
\end{proof}

\begin{proposition}[Torus obstruction]\label{prop:torus}
If \(p,q\ge3\) and \(\gcd(p,q)=1\), then
\[
     \Cp_p\square\Cp_q\not\longrightarrow T_A,
     \qquad
     \Cp_p\square\Cp_q\not\longrightarrow T_B.
\]
The same holds for the reversals of these tournaments.
\end{proposition}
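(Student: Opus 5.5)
Suppose $c:\Cp_p\square\Cp_q\to T$ is a homomorphism with $T\in\{T_A,T_B\}$. We index vertices by $(i,j)\in\mathbb Z/p\mathbb Z\times\mathbb Z/q\mathbb Z$, with horizontal arcs $(i,j)\to(i+1,j)$ and vertical arcs $(i,j)\to(i,j+1)$. For each $j$, the row $r^{(j)}=(c(i,j))_{i\in\mathbb Z/p\mathbb Z}$ is a directed closed walk of length $p$ in $T$. Consecutive rows satisfy $r^{(j)}_i\to r^{(j+1)}_i$ for every $i$, which is exactly the hypothesis of Lemma~\ref{lem:shift}. Applying that lemma to the pair $(r^{(j)},r^{(j+1)})$ gives $M(r^{(j+1)})=M(r^{(j)})-1$ for every $j$.

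Next we iterate around the vertical cycle. After $q$ steps we return to row $j$, so
\[
M(r^{(0)})=M(r^{(q)})=M(r^{(0)})-q .
\]
Hence $S=M(r^{(0)})\subseteq\mathbb Z/p\mathbb Z$ is invariant under translation by $q$. Since $\gcd(p,q)=1$, the element $q$ generates $\mathbb Z/p\mathbb Z$, so $S$ is invariant under all translations. It follows that $S=\varnothing$ or $S=\mathbb Z/p\mathbb Z$, and either case contradicts Lemma~\ref{lem:row} applied to the closed walk $r^{(0)}$. The hypothesis $p\ge3$ ensures that the triples $(r_i,r_{i+1},r_{i+2})$ and the index shift are meaningful; the lemmas are stated for any closed walk of length $p$, so this causes no trouble.

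For the reversals, note that a homomorphism $\Cp_p\square\Cp_q\to T^{\mathrm{op}}$ is the same thing as a homomorphism $(\Cp_p\square\Cp_q)^{\mathrm{op}}\to T$. Since $(\Cp_p\square\Cp_q)^{\mathrm{op}}\cong\Cp_p\square\Cp_q$ via $(i,j)\mapsto(-i,-j)$, the reversed case reduces to the case already handled.

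The only delicate point is matching orientations and index conventions with Lemma~\ref{lem:shift}: it requires arcs $r_i\to s_i$ from the earlier row to the later one, and the $-1$ shift must be applied consistently around the cycle. Reversing the roles (vertical cycles as rows) is not needed, because we only use invariance under $\pm q$, and both signs generate $\mathbb Z/p\mathbb Z$. Beyond this bookkeeping, all the substantive combinatorial content is contained in Lemmas~\ref{lem:shift} and~\ref{lem:row}, which we take as given.
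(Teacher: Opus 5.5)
Your proposal is correct and follows the paper's proof step for step. Lemma~\ref{lem:shift} gives $M_{j+1}=M_j-1$; going around the $q$ rows makes $M_0$ invariant under translation by $q$, so $M_0$ is trivial because $\gcd(p,q)=1$, contradicting Lemma~\ref{lem:row}; and the reversed tournaments are handled by the same substitution $(i,j)\mapsto(-i,-j)$.
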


\begin{proof}
Suppose
\[
               f:\Cp_p\square\Cp_q\longrightarrow T
\]
is a graph homomorphism to \(T=T_A\) or \(T_B\).  Let \(r^{(j)}\) be the directed closed walk which is the image of the \(j\)-th horizontal
row and set \(M_j=M(r^{(j)})\).  Lemma \ref{lem:shift} gives
\[
                         M_{j+1}=M_j-1.
\]
After traversing all \(q\) rows,
\[
                         M_0=M_q=M_0-q
\]
as subsets of \(\mathbb Z/p\mathbb Z\).  Since \(\gcd(p,q)=1\),
translation by \(q\) acts transitively on \(\mathbb Z/p\mathbb Z\).
Its only invariant subsets are the empty set and the whole set.  This
contradicts Lemma \ref{lem:row}.

For the reversal, if \(f:\Cp_p\square\Cp_q\to T^{\mathrm{op}}\), then
\[
                         g(i,j)=f(-i,-j)
\]
is a homomorphism to \(T\), giving the same contradiction.
\end{proof}

\begin{corollary}\label{cor:hardblocked}
For every \(1\le n<p,q\) with \(\gcd(p,q)=1\),
\[
 \GammaT_{n,p,q}\not\longrightarrow
 T_A,T_A^{\mathrm{op}},T_B,T_B^{\mathrm{op}}.
\]
\end{corollary}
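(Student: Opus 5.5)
The plan is to restrict a hypothetical homomorphism to the torus tile and apply Proposition \ref{prop:torus}. Suppose $c:\GammaT_{n,p,q}\to T$ is a graph homomorphism with $T\in\{T_A,T_A^{\mathrm{op}},T_B,T_B^{\mathrm{op}}\}$. We may assume $p,q\ge 3$. Otherwise $\GammaT_{n,p,q}$ contains a directed two-cycle, and since $T$ is a tournament there is already no homomorphism.

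Next we restrict $c$ to the tile $T_{cb=bc}$. By geometric fact (b) in Section~\ref{sec:prelim}, once its opposite boundary blocks are identified this tile is the directed Cartesian torus $\Cp_p\square\Cp_q$. The identifications inside $\GammaT_{n,p,q}$ match equally labeled blocks, so the quotient map from the tile into $\GammaT_{n,p,q}$ factors through this torus. Composing with $c$ therefore gives a homomorphism $\Cp_p\square\Cp_q\to T$.

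Proposition \ref{prop:torus} rules out such a homomorphism for $T_A$ and $T_B$, using $\gcd(p,q)=1$ and $p,q\ge 3$. Its last sentence handles the reversals $T_A^{\mathrm{op}}=T_{34}$ and $T_B^{\mathrm{op}}=T_{35}$ by precomposing with the involution $(i,j)\mapsto(-i,-j)$ of the torus. This is a contradiction in every case.

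The only step needing care is the identification in the second paragraph. I need to confirm that the row and column lengths of the identified $T_{cb=bc}$ are exactly $p$ and $q$, with no extra vertices surviving from the $R_\times$ corners. Each side consists of $R_\times$ followed by $R_c$ (length $n+(p-n)=p$) or by $R_b$ (length $q$), and the repeated $R_\times$ corners are glued. So the identified tile is indeed the $p\times q$ torus, as stated in (b). Which coordinate carries $p$ and which carries $q$ does not matter, because the hypotheses of Proposition \ref{prop:torus} are symmetric in $p$ and $q$.
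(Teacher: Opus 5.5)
Your proposal is correct and follows the argument the paper intends. The paper states this corollary without a separate proof, as an immediate consequence of three things: restricting to the torus tile $T_{cb=bc}$, which becomes $\vec C_p\square\vec C_q$ by fact (b) of Section~\ref{sec:prelim}; the directed two-cycle remark, which disposes of $p=2$ or $q=2$; and Proposition~\ref{prop:torus} together with its reversal clause.
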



\begin{theorem}\label{thm:lower}
There is no continuous graph homomorphism from \(\Shift\) to any strong tournament on 6 vertices. Thus
\[
                           \chic(\Shift)>6.
\]
\end{theorem}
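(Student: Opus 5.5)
The plan is to argue by contradiction through the Directed Twelve Tiles Theorem (Theorem \ref{thm:twelve}). Suppose there is a continuous homomorphism $c:\Shift\to H$ with $|V(H)|\le 6$, where $H$ is an oriented graph. As noted in Section \ref{sec:prelim}, we may enlarge $H$ to a tournament on the same vertex set, and after adding isolated vertices we may assume $|V(H)|=6$. The implication (i)$\Rightarrow$(ii) of Theorem \ref{thm:twelve} then gives some $1\le n<p,q$ with $\gcd(p,q)=1$ and a homomorphism $\GammaT_{n,p,q}\to H$. Since $H$ has no $2$-cycle, $p,q\ge 3$.

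Next we reduce to strong tournaments. The digraph $\GammaT_{n,p,q}$ is strongly connected, so its image lies in a single strong component $S$ of $H$. The subtournament $S$ is strong and has at most $6$ vertices. If $|S|=6$, then $S$ is isomorphic to one of $T_1,\dots,T_{35}$, because the table in Section \ref{sec:prelim} lists all $35$ strong $6$-vertex tournaments.

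If $|S|<6$, one option is to note that $S$ embeds into some strong $6$-vertex tournament, which would reduce this case to the previous one. I am not sure that embedding holds in general, so the argument will not depend on it. A safer route is to handle small strong components directly. A single vertex is impossible, since $\GammaT_{n,p,q}$ has arcs. For a strong tournament on at most $5$ vertices, I would exhibit an admissible energy (for example an order energy, or an analogue of the Remark's cyclic energy) and apply Proposition \ref{prop:long}. This is the step that needs care, but it is a finite check.

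In the main case $|S|=6$, Corollary \ref{cor:31blocked} rules out $T_1,\dots,T_{31}$, and Corollary \ref{cor:hardblocked} rules out $T_{32}=T_A$, $T_{33}=T_B$, $T_{34}=T_A^{\mathrm{op}}$ and $T_{35}=T_B^{\mathrm{op}}$. Together these give the contradiction, so $\chic(\Shift)>6$.
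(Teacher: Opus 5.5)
Your proposal follows the paper's route: Theorem~\ref{thm:twelve} (i)$\Rightarrow$(ii) gives a homomorphism from $\vec\Gamma_{n,p,q}$. Since $\vec\Gamma_{n,p,q}$ is strongly connected, its image lies in a single strong component $S$. Corollaries~\ref{cor:31blocked} and~\ref{cor:hardblocked} then exclude all 35 strong six-vertex tournaments.

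The only loose end is the case $|S|<6$. The paper passes over it silently, and you left it as a finite check you did not carry out. The embedding you doubted is valid and closes this case in one line. Take $S$ strong with $3\le |S|\le 5$, and adjoin a vertex $v$ with at least one arc from $S$ to $v$ and at least one arc from $v$ to $S$. Then $S\cup\{v\}$ is strong, since $v$ reaches $S$, $S$ is strong, and $S$ reaches $v$. Iterating gives a strong six-vertex tournament $T\supseteq S$, and $\vec\Gamma_{n,p,q}\to S\hookrightarrow T$ contradicts the corollaries.

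Your energy route would also succeed, but it is unnecessary. Order energies (with suitable vertex labelings) are admissible for the strong tournaments on $3$ and $4$ vertices and for five of the six strong tournaments on $5$ vertices. The Remark's cyclic energy handles the regular $5$-vertex tournament.
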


\section{The seven-color upper bound}\label{sec:upper}

Let \(R_7\) be the tournament on \(\{0,1,\ldots,6\}\) with arcs
\[
\begin{aligned}
&0\to1,\quad 0\to3,\quad 0\to6,\\
&1\to2,\quad 1\to5,\\
&2\to0,\quad 2\to4,\quad 2\to6,\\
&3\to1,\quad 3\to2,\quad 3\to4,\quad 3\to6,\\
&4\to0,\quad 4\to1,\quad 4\to5,\\
&5\to0,\quad 5\to2,\quad 5\to3,\\
&6\to1,\quad 6\to4,\quad 6\to5.
\end{aligned}
\]

\begin{proposition}\label{prop:upper}
There is a homomorphism
\[
                         \GammaT_{1,3,4}\longrightarrow R_7.
\]
Consequently,
\[
                         \chic(\Shift)\le7.
\]
\end{proposition}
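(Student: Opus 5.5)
The plan is to exhibit an explicit coloring of $\GammaT_{1,3,4}$ into $R_7$ and then invoke the Directed Twelve Tiles Theorem (Theorem \ref{thm:twelve}, (ii)$\Rightarrow$(i)). With $n=1$, $p=3$, $q=4$, the blocks have sizes $R_\times:1\times1$, $R_a:1\times2$, $R_b:1\times3$, $R_c:2\times1$, $R_d:3\times1$. The graph $\GammaT_{1,3,4}$ is therefore a small finite digraph obtained by gluing the twelve tiles along these blocks. The largest tiles, $T_{c^qa=ad^p}$ and its relatives, have only $2\times(1+4\cdot3)$ or so vertices.

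\textbf{Step 1: fix the shared boundary blocks.} Since every $R_\times$ is identified to a single vertex, I first choose its color $x\in V(R_7)$. I then choose:
\begin{itemize}
\item the two colors on $R_a$ so that $x\to a_1\to a_2\to x$ is a directed 3-cycle;
\item the three colors on $R_b$ so that $x\to b_1\to b_2\to b_3\to x$ is a directed 4-cycle;
\item similarly, horizontal walks $R_c$ of length $3$ and $R_d$ of length $4$ closing up at $x$.
\end{itemize}
These closed walks are forced because $R_\times$ is repeated around each boundary. $R_7$ has vertex $0$ with $0\to1\to2\to0$ and $0\to3\to1\to2\to0$ available, and similar cycles exist through other vertices, so there are several candidates.

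\textbf{Step 2: fill in each tile's interior.} Given the boundary data, each tile becomes a separate constraint-satisfaction problem. The interior of a tile is a directed grid whose left and top neighbors constrain each cell: a cell must lie in $N^+(\text{left})\cap N^+(\text{up})$. I fill row by row, tracking the set of feasible colorings of the current row as a transfer relation between consecutive rows (the same idea as Lemma \ref{lem:shift}). Because $R_7$ is a tournament on 7 vertices, each common out-neighborhood $N^+(u)\cap N^+(v)$ is typically nonempty, so most cells are easy to fill.

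The binding constraints are the last row or column, which must match a prescribed boundary block. For the torus tiles the interior is the torus $\Cp_p\square\Cp_q$ with small $p,q$. For the long tiles $T_{c^qa=ad^p}$ and $T_{d^pa=ac^q}$, the top is $q$ copies of the 3-walk and the bottom is $p$ copies of the 4-walk. By Lemma \ref{lem:square}-type reasoning, this requires that $R_7$ carries no admissible energy distinguishing them. That requirement is consistent with the lower-bound sections, but it still has to be realized concretely.

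\textbf{Step 3: certify.} I will present the resulting coloring as a table of colors for each of the twelve tiles, which constitutes the proof. Checking it is finite: every horizontal and vertical arc maps to an arc of $R_7$, and equally labeled boundary blocks receive identical colors.

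\textbf{Main obstacle.} The hard step is choosing the boundary data of Step 1 so that all twelve tiles are simultaneously fillable, especially the long tiles and the commutativity tiles $T_{dca=acd}$ and related ones. A bad initial choice of $x$ or of the $R_c,R_d$ walks makes some tile infeasible. My first attempt will be a computer search: enumerate the few closed 3- and 4-walks through each $x$ (up to automorphisms of $R_7$), and for each candidate run the row-transfer feasibility check on all twelve tiles, backtracking on the boundary choice. I expect a solution to appear quickly given the small sizes, and I will record the found coloring as a certificate in the appendix.
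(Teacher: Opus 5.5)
Your route is the paper's route: exhibit an explicit coloring of the twelve tiles of $\GammaT_{1,3,4}$ that agrees on equally labeled blocks, check every arc against $R_7$, and apply Theorem~\ref{thm:twelve}, (ii)$\Rightarrow$(i). Your block sizes are correct, and so are the forced closed walks through the single $R_\times$ vertex.

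The gap is that the proposal stops before the one step that has content. For an existence statement like this, the certificate \emph{is} the proof, and ``I expect a solution to appear quickly'' establishes nothing. Steps 1--2 give no reason the search must succeed:
\begin{itemize}
\item Local fillability is not automatic: in $R_7$, $N^+(0)\cap N^+(1)=\varnothing$.
\item The long tiles impose the global constraint of Section~\ref{sec:energy}, which rules out any target carrying an admissible energy.
\item By Theorem~\ref{thm:lower}, the identical search provably fails for every tournament on six vertices. Whether seven colors suffice is exactly what has to be exhibited.
\end{itemize}
The paper closes this with the tables of Appendix~\ref{app:coloring}. There $R_\times\mapsto 0$, $R_a$ and $R_c\mapsto(1,2)$, and $R_b$ and $R_d\mapsto(3,4,5)$, giving the walks $0\to1\to2\to0$ and $0\to3\to4\to5\to0$. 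The eight torus and commutativity tiles use only colors $0,\dots,5$, and color $6$ appears only in the interiors of the four long tiles. Once you produce such a table (or take this one) and verify it arc by arc, your argument is complete.

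A minor slip: the long tiles are $13\times 4$, i.e.\ $52$ vertices, not about $2\times 13$. The $R_a$ sides contribute $p-n=2$ rows between the two corner rows.
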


\begin{proof}
As illustrated in Appendix \ref{app:coloring}, there is an oriented coloring of $\GammaT_{1,3,4}$ with 7 colors, rows are written top-to-bottom and columns left-to-right.  Horizontal
arcs point right and vertical arcs point down. Theorem \ref{thm:twelve} lifts the
finite homomorphism to a continuous homomorphism \(\Shift\to R_7\).
\end{proof}

\begin{proof}[Proof of Theorem \ref{thm:main}]
Combine Theorems \ref{thm:lower} and Proposition \ref{prop:upper}.
\end{proof}

\section{The Borel oriented chromatic number of abelian group actions}
In this section, we show that the Borel oriented chromatic number of the directed Schreier graph
$\vec F(2^{\mathbb Z^n})$, $n>1$ is
\[
             \chi_{Bo}(\vec F(2^{\mathbb Z^n}))=5.
\]
\subsection{The lower bound}
\begin{proposition}\label{prop:Borellower}
    $\chi_{Bo}(\vec F(2^{\mathbb Z^n}))>4,\ n>1.$
\end{proposition}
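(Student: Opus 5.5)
The plan is to reduce to the strong tournaments on at most $4$ vertices and then rule each one out with an ergodic argument for the Bernoulli measure $\mu=(\tfrac12\delta_0+\tfrac12\delta_1)^{\mathbb Z^n}$. This measure is shift invariant, concentrated on $F(2^{\mathbb Z^n})$, and every nonzero $g\in\mathbb Z^n$ acts mixingly, hence ergodically. Let $c$ be a $\mu$-measurable homomorphism to a tournament $T$ with $|V(T)|\le 4$. Each component is a copy of the directed Cayley graph of $\mathbb Z^n$, which is strongly connected, so its image lies in one strong component of $T$. The possible strong components of size at least $2$ are the directed triangle $\vec C_3$ and the unique strong $4$-tournament $S_4$, with arcs $0\to1,0\to2,1\to2,1\to3,2\to3,3\to0$. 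The set of points whose component lands in a given strong component is invariant, so by ergodicity we may assume $c$ takes values a.e.\ in $\vec C_3$ or a.e.\ in $S_4$. We may also assume $n=2$ by restricting to the action of $\langle e_1,e_2\rangle$, which is still mixing.

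\textbf{Case $\vec C_3$.} Every arc forces $c(e_ix)=c(x)+1 \pmod 3$. Hence $c$ is invariant under $g=e_1-e_2$, and ergodicity of $g$ makes $c$ a.e.\ constant. This contradicts $c(e_1x)=c(x)+1$.

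\textbf{Case $S_4$.} I will imitate Section \ref{sec:energy}. The order energy $\eta$ on $S_4$ gives energy one only to the arc $3\to0$. Checking the interval criterion of Lemma \ref{lem:interval} shows that $\eta$ is diamond-compatible: the only nontrivial pair is $I(0,3)=\{1,2\}$, and all other pairs have at most one middle vertex. Set $f_i(x)=\eta(c(x),c(e_ix))$. Lemma \ref{lem:square} gives the cocycle identity
\[
f_1(x)+f_2(e_1x)=f_2(x)+f_1(e_2x).
\]
So $\rho(x,a e_1+b e_2)$ is well defined and path independent, first for $a,b\ge0$ and then for all of $\mathbb Z^2$ by the usual extension.

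Next I will use the row structure. Every directed walk in $S_4$ is a concatenation of the closed blocks $0123$, $013$ and $023$, and the energy-one arcs are exactly the block boundaries $3\to0$. Thus $\rho(x,Ne_i)$ counts blocks, so $\rho(x,Ne_i)\approx N/\ell_i(x)$, where $\ell_i(x)$ is the average block length along the row in direction $i$. By the ergodic theorem, $\rho(x,Ne_i)/N\to\alpha_i=\int f_i\,d\mu$ almost everywhere.

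The decisive step, and the one I expect to be the main obstacle, is to show that a measurable solution forces the blocks in every row to be asymptotically all of length $3$ or all of length $4$. One then needs to show that the two directions are locked together in a way impossible for the ergodic action of $e_1-e_2$. This is a measurable version of the coprime-length contradiction in Proposition \ref{prop:long}.

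What I would try first is the auxiliary function $h(x)=\rho(x,e_1-e_2)$. I would use the $2\times k$ local analysis, in the style of Lemma \ref{lem:shift}, to show that $h$ is bounded. I would also use it to show that the position of the $0$'s in the row through $e_2x$ is a bounded shift of the position in the row through $x$. This would make the phase of the block decomposition a measurable function that is nearly invariant under $e_1-e_2$. A Rokhlin-type or ergodic-averaging argument should then contradict the fact that the phase must advance along $e_1$. If the boundedness fails, the fallback is to extract from $c$ an explicit measurable homomorphism to $\vec C_3$ on a positive-measure invariant set, for example by collapsing each block into one step. That would reduce the problem to the first case.
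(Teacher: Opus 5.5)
\textbf{Gap in Case $S_4$.} Your reduction to strong components and your $\vec C_3$ case are fine. The $S_4$ case, however, is the only substantive one, and it is not proved. The step you yourself call decisive is only a list of things you would try: forcing all blocks in a row to one length, locking the two directions together, and contradicting ergodicity of $e_1-e_2$. Your tools for it (bounding $h$, a Rokhlin-type argument, a fallback collapse to $\vec C_3$) are likewise not carried out. So as written, $S_4$ is not excluded.

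The energy/cocycle framework also cannot do this job on its own. By the Remark in Section~\ref{sec:energy} with $n=5$, the tournament $P_5$ carries an admissible energy. Yet Theorem~\ref{prop:Borelupper} gives a Borel, hence measurable, homomorphism $\vec F(2^{\mathbb Z^n})\to P_5$. Proposition~\ref{prop:long} works only because continuity yields the finite long tile with two coprime periods, and this has no measurable counterpart. Any measurable version of the coprime-length contradiction must therefore use a specific feature of $S_4$, and your proposal does not identify one.

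\textbf{The missing idea.} The feature you need is purely local: in $S_4$ we have $N^+(3)=\{0\}$ and $N^-(0)=\{3\}$. Hence
\[
c(e_1\cdot x)=0 \iff c(x)=3 \iff c(e_2\cdot x)=0 .
\]
So $A=c^{-1}(0)$ is invariant under $y\mapsto (e_1-e_2)\cdot y$, exactly as in your $\vec C_3$ case. Your own setup already contains this. Since $3\to 0$ is the only energy-one arc and $0$ is the only out-neighbour of $3$, you have $f_1=f_2=\mathbf 1_{c^{-1}(3)}$. Your cocycle identity then reads $\mathbf 1_{c^{-1}(3)}(e_1\cdot x)=\mathbf 1_{c^{-1}(3)}(e_2\cdot x)$.

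Ergodicity of $e_1-e_2$ gives $\mu(A)\in\{0,1\}$, and both values are impossible:
\begin{itemize}
\item $\mu(A)=1$ fails because no $x$ has $x\in A$ and $e_1\cdot x\in A$, as $0\not\to 0$. Thus $A$ and $(-e_1)\cdot A$ are disjoint sets of equal measure, so $\mu(A)\le 1/2$.
\item $\mu(A)=0$ fails because $S_4-\{0\}$ is the transitive tournament $1\to2\to3$, $1\to 3$. So one of $x,\ e_1\cdot x,\ 2e_1\cdot x,\ 3e_1\cdot x$ lies in $A$, giving $\mu(A)\ge 1/4$.
\end{itemize}
This is the paper's proof, which phrases the dichotomy as meager versus comeager; your measure version works equally well and gives the measurable statement directly. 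No ergodic averages, block-length analysis or phase argument is needed.
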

\begin{proof}
    An easy ergodicity argument shows that $\chi_{Bo}(\vec F(2^{\mathbb Z^n}))>3,\ n>1.$

    There is only one strong tournament on four vertices. We label it by
\(\{0,1,2,3\}\), with arcs
\[
0\to1,\quad 0\to2,\quad 1\to2,\quad 1\to3,\quad 2\to3,\quad 3\to0.
\]
Now we assume that there is a Borel graph homomorphism $c: F(2^{\mathbb Z^n})\to\{0,1,2,3\}$. Note that \(3\) has unique out-neighbor \(0\), and \(0\) has unique in-neighbor \(3\), we have that
\[
c(e_1\cdot x)=0
\quad\Longleftrightarrow\quad
c(e_2\cdot x)=0.
\]
So $c^{-1}(0)$ is invariant under ergodic group action $\mathbb Z\curvearrowright F(2^{\mathbb Z^n})$ by $1\cdot x=(e_1-e_2)x$, then $c^{-1}(0)$ is either meager or comeager, both are not possible.
\end{proof}
\subsection{The upper bound}

We show that there is a Borel graph homomorphism from the directed Schreier graph $\vec F(2^{\mathbb Z^n})$ to the regular tournament on 5 vertices $P_5$ by using the Borel toast structure where $V(P_5)=\mathbb Z_5$ and 
$$(i,j)\in A(P_5)\Longleftrightarrow j-i\equiv 1,2(\bmod 5).$$
\begin{theorem}[\cite{GJKS}]\label{thm:toast}
Fix \(R\in\mathbb N\), say $R=100$.  There is a Borel family \(\cT\) of finite connected
subsets such that:
\begin{enumerate}[label=(\roman*)]
\item any two members of \(\cT\) are either disjoint or one contains the other;
\item every finite connected subset of an orbit is contained in some member of
\(\cT\);
\item for every \(D\in\cT\), if \(E_1,\dots,E_m\) are the maximal proper
members of \(\cT\) contained in \(D\), then the boundaries of $D,E_1,\cdots,E_m$
are pairwise separated at least \(R\)-distance.
\end{enumerate}
\end{theorem}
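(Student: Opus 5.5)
This is the toast theorem of Gao, Jackson, Krohne and Seward, which the paper cites rather than proves. If I were to prove it, I would build $\cT$ level by level from Borel marker regions at rapidly increasing scales, and then repair nesting and boundary separation inductively.

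\textbf{Step 1 (marker regions at many scales).} Fix a sequence $d_1\ll d_2\ll\cdots$ growing fast, say $d_{k+1}>2^{k}\,(d_k+10R)$. By the Gao--Jackson marker lemma, for each $k$ there is a Borel set $M_k$ that is $d_k$-discrete and $2d_k$-dense in every orbit. Their rectangular-region construction then gives a Borel partition $\mathcal R_k$ of each orbit into rectangles with side lengths between $d_k$ and $4d_k$.

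\textbf{Step 2 (covering every finite set).} To get property (ii), each point must lie deep inside some level-$k$ region for infinitely many $k$. I would use $n+1$ copies of the level-$k$ partition, translated relative to one another by about $d_k/(n+1)$ in each coordinate. This can be done Borel-ly, since the translate is a fixed group element applied to the marker set. By pigeonhole on the $n$ coordinates, every point is at distance at least roughly $d_k/(2(n+1))$ from the boundary in at least one copy. Interleaving the copies gives a single sequence of partitions $\mathcal R'_j$ with scales $d'_j$. Every finite connected set $F$ of diameter less than $d'_j/(4(n+1))$ then sits well inside some member of $\mathcal R'_j$ for infinitely many $j$.

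\textbf{Step 3 (making the family nested with $R$-separated boundaries).} Proceed by induction on $j$. Suppose the pieces of levels below $j$ are already pairwise nested or disjoint and satisfy (iii). Each such piece has diameter at most about $C d'_{j-1}\ll d'_j$. For a region $Q\in\mathcal R'_j$, enlarge it to
\[
\widehat Q=Q\cup\bigcup\{E : E \text{ a maximal earlier piece within distance } R \text{ of } Q\}.
\]
Then shrink $\widehat Q$ by removing a collar of width $2R$ around those absorbed pieces that would otherwise come too close to the new boundary. I would then check the following.

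(a) The result $D$ is still connected and contains the interior core of $Q$. This holds because the changes occur only in a neighbourhood of the boundary of width $O(d'_{j-1}+R)$.

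(b) Every earlier piece is either contained in $D$ or disjoint from it, at distance at least $R$ from $\partial D$.

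(c) The maximal earlier pieces inside $D$ are pairwise $R$-separated. This is inherited from the inductive hypothesis applied at their own level.

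Everything is defined by finitely many local rules at each level, so the resulting family $\cT$ is Borel.

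\textbf{Main obstacle.} The hard step is Step 3: the boundary modification at level $j$ must not cascade. Absorbing one earlier piece might bring the new boundary within $R$ of another earlier piece, and so on. I expect to control this with the scale gap. Earlier maximal pieces are pairwise at least $R$ apart, and each has diameter at most $C d'_{j-1}$. So a single pass of "absorb anything within $R$, then re-check" terminates after boundedly many rounds, and moves the boundary by at most $O(d'_{j-1})$. Finally I would need to confirm that the core guaranteed in Step 2 survives this perturbation; this is the quantitative heart of the argument. Choosing $d'_j\ge 100(n+1)(d'_{j-1}+R)$ should suffice.
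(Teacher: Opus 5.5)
The paper only quotes this theorem from \cite{GJKS}, so there is no internal proof to compare with. Judged on its own, your plan has a genuine gap, and it is not the inter-level cascade you single out. With a large scale gap, and earlier maximal pieces that are small and several multiples of $R$ apart, absorbing one layer of them is manageable. (Around an absorbed piece you must \emph{add} an $R$-collar to $D$, not remove one.)

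The real problem is the tension between (ii) and (iii) \emph{within a single level}. Your level-$j$ candidates are the cells of a partition $\mathcal R'_j$, so neighbouring cells touch. An earlier piece within distance $R$ of the common face of two cells $Q,Q'$ is absorbed by your rule into both $\widehat Q$ and $\widehat{Q'}$, which violates (i). Two adjacent level-$j$ pieces that become maximal proper subpieces of the same level-$(j+1)$ piece have boundaries at distance $1$, which violates (iii). Your item (c) cannot exclude this, because your inductive hypothesis only controls pieces with a common parent. It must be strengthened to ``all currently maximal pieces are pairwise far apart'', and that forces you to cut corridors of width at least $R$ between the cells of every level. Once you do that, (ii) is no longer automatic. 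A point, or two points on opposite sides of a face, can lie in a corridor at every level, and with one independently chosen partition per level nothing prevents this on every orbit.

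Step 2 is meant to cure exactly this, but it is incompatible with Step 3. The $n+1$ translates of $\mathcal R_k$ all have scale $d_k$, so after interleaving, consecutive levels have equal scales. The gap $d'_j\ge 100(n+1)(d'_{j-1}+R)$ therefore fails between copies, and both your no-cascade argument and the survival of the core of $Q$ rest on it. A piece coming from copy $i$ has diameter up to about $4d_k$ and meets several cells of copy $i+1$. Absorbing or carving it can swallow or destroy the very core that was supposed to contain $F$. If you instead give the copies different scales, they are no longer translates of one partition, and the covering argument is lost.

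The pigeonhole is also false as stated, since faces of different rectangles need not line up. In $\Z^2$, take a brick wall with rows of height $d$, bricks of length between $d$ and $4d$, and a row boundary along $x_2=-d/3$. Let the upper row have a vertical break at $x_1=0$ and the lower row one at $x_1=-2d/3$. With $t=(d/3,d/3)$, the points $x=(-1,0)$, $x-t$ and $x-2t$ all lie within distance $1$ of a boundary, so some point is near the boundary in all three translates. More translates and smaller constants would repair that particular claim, but not the scale conflict.

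What is missing is a multi-scale system of regions with two properties. Their boundaries at different scales must be transverse enough that every bounded set eventually lies deep inside a region. The candidates used at each level must be separated by much more than the diameter of everything built before. Producing such a system is the genuinely hard part of the toast theorem; in the work of Gao, Jackson, Krohne and Seward it is done with specially constructed ``orthogonal'' marker regions, not with translates of a single partition.
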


Now we fix some notations.

For \(z=(z_1,\dots,z_n)\in\mathbb Z^n\), write
$
\sigma(z)=\Sigma z_i.
$
A function \(H:Q\to\mathbb Z\), where \(Q\subseteq\mathbb Z^n\), gives an oriented
\(P_5\)-coloring by modulo \(5\), if
\[
H(z+e_i)-H(z)\in\{1,2\}
\]
whenever \(z,z+e_i\in Q\).  Let
$
F(z)=H(z)-\sigma(z).
$
Then the condition becomes
\[
F(z+e_i)-F(z)\in\{0,1\}.
\]

The \emph{model height} is
\[
F_0(z)=\left\lfloor\frac{\sigma(z)}2\right\rfloor,
\qquad
H_0(z)=\sigma(z)+F_0(z).
\]
It is easy to see that \(H_0(\bmod 5)\) is an oriented \(P_5\)-coloring on the directed Cayley graph.

Let \(Q\subseteq\mathbb Z^n\) be a subset of vertices.  For \(u,v\in Q\), we define a quasi-metric
\(\delta_Q(u,v)\) as follows. $\delta(u,u+e_i)=1$ and $\delta(u,u-e_i)=0$.
$$\delta_Q(u,v)=\min\{\Sigma\delta(u_i,u_{i+1}):u_0=u,u_m=v,u_0,u_1,\cdots,u_m\text{ is an undirected path}\}.$$

\(\delta_Q(u,v)=\infty\) if no such path exists. For example, in the full lattice,
$
\delta_{\mathbb Z^n}(u,v)
 =
\sum_{i=1}^n \max(v_i-u_i,0).
$

The following lemmas are the key observations in the proof.

\begin{lemma}[extension criterion]\label{lem:criterion}
Let \(S\subseteq Q\subseteq \mathbb Z^n\) be finite subsets and assume that every connected component of
$Q$ meets $S$. and \(f:S\to\mathbb Z\). Then \(f\) can be extended to a function \(F:Q\to\mathbb Z\)
satisfying
\[
F(z+e_i)-F(z)\in\{0,1\}
\]
for all \(z,z+e_i\in Q\), if and only if
\[
f(v)-f(u)\leq \delta_Q(u,v)
\]
for each ordered pair \((u,v)\in S^2\).
\end{lemma}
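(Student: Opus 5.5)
This is a difference-constraint (potential) problem. Write $u\to v$ for $v=u+e_i$ with both in $Q$. The requirement on $F$ says: for every such arc, $F(v)-F(u)\le 1$ and $F(u)-F(v)\le 0$. These are exactly the one-step values $\delta(u,u+e_i)=1$ and $\delta(u+e_i,u)=0$ defining $\delta_Q$.

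\emph{Necessity.} Suppose $F$ exists, and take an undirected path $u=u_0,u_1,\dots,u_m=v$ in $Q$. Each step satisfies $F(u_{k+1})-F(u_k)\le \delta(u_k,u_{k+1})$. Summing over the path gives $F(v)-F(u)\le\sum\delta(u_k,u_{k+1})$. Minimizing over paths yields $f(v)-f(u)\le\delta_Q(u,v)$ for $u,v\in S$. If $u$ and $v$ lie in different components, then $\delta_Q(u,v)=\infty$ and there is nothing to check.

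\emph{Sufficiency.} Define the extension by the standard McShane-type formula
\[
F(z)=\min_{s\in S}\bigl(f(s)+\delta_Q(s,z)\bigr),\qquad z\in Q.
\]
The steps are as follows.

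First, $F$ is finite. Every component of $Q$ meets $S$, so some $s$ has $\delta_Q(s,z)<\infty$. Also $S$ is finite and $\delta_Q\ge0$, so the minimum exists and is an integer.

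Second, $\delta_Q$ satisfies the triangle inequality $\delta_Q(s,w)\le\delta_Q(s,z)+\delta_Q(z,w)$, by concatenating paths. Hence $F(w)\le F(z)+\delta_Q(z,w)$ for all $z,w\in Q$. Applied to an arc $z\to z+e_i$, this gives $F(z+e_i)-F(z)\le 1$ and $F(z)-F(z+e_i)\le 0$, which is exactly $F(z+e_i)-F(z)\in\{0,1\}$.

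Third, $F$ agrees with $f$ on $S$. For $v\in S$, the choice $s=v$ gives $F(v)\le f(v)+\delta_Q(v,v)=f(v)$, since $\delta_Q(v,v)=0$ via the trivial path. Conversely, for every $s\in S$ the hypothesis $f(v)-f(s)\le\delta_Q(s,v)$ gives $f(s)+\delta_Q(s,v)\ge f(v)$, so $F(v)=f(v)$.

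There is no real obstacle here. The only point requiring care is the bookkeeping of the quasi-metric's asymmetry: the orientation convention $\delta(u,u+e_i)=1$, $\delta(u,u-e_i)=0$ must match the two inequalities $F(u+e_i)-F(u)\le1$ and $F(u)-F(u+e_i)\le0$. One must also note that $\delta_Q(u,u)=0$ and that the triangle inequality holds even with infinite values.
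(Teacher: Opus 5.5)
Your proposal is correct and follows essentially the same route as the paper. Both use the extension $F(z)=\min_{s\in S}\bigl(f(s)+\delta_Q(s,z)\bigr)$, get agreement on $S$ from the choice $s=t$ together with the hypothesis, and obtain the arc condition from $\delta_Q(s,z+e_i)\le\delta_Q(s,z)+1$ and $\delta_Q(s,z)\le\delta_Q(s,z+e_i)$, which is your triangle inequality specialized to single arcs; you additionally spell out the necessity direction by summing one-step inequalities along a path, which the paper only calls easy.
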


\begin{proof}
The necessity is easy.

Conversely, define
\[
F(z)=\min_{s\in S}\bigl(f(s)+\delta_Q(s,z)\bigr).
\]
The minimum is finite because every component of $Q$ meets $S$. 

For $t\in S$,
take $s=t$ then we have $F(t)\leq f(t)$. On the other hand,
\[
f(t)-f(s)\leq\delta_Q(s,t)
\]
for every $s\in S$, taking minima over $s\in S$ gives $f(t)\leq F(t)$. Hence $F(t)=f(t)$.

If \(z,z+e_i\in Q\), then
\[
\delta_Q(s,z+e_i)\leq \delta_Q(s,z)+1
\quad
\text{ and }
\quad
\delta_Q(s,z)\leq \delta_Q(s,z+e_i).
\]
Taking minima over \(s\in S\)
gives
\[
F(z)\leq F(z+e_i)\leq F(z)+1.
\]

\end{proof}

For a based point \(a\in\mathbb Z^n\), let
\[
H_a(z)
 =
\sigma(z-a)+\left\lfloor\frac{\sigma(z-a)}2\right\rfloor,
\]
and
\[
f_a(z)
 =
-\sigma(a)+\left\lfloor\frac{\sigma(z)-\sigma(a)}{2}\right\rfloor.
\]
Then,
\[
H_a(z)=\sigma(z)+f_a(z).
\]

\begin{lemma}[phase lifting]\label{lem:phase-lifting}
For every \(a\in\mathbb Z^n\), there is \(k\in\mathbb Z\) such that
\[
\left|f_a(z)+5k-F_0(z)\right|\leq 3
\]
for every \(z\in\mathbb Z^n\).
\end{lemma}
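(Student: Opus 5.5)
Write $s=\sigma(z)$ and $t=\sigma(a)$. The plan is to compare the two floor functions directly; both quantities depend on $z$ only through $s$. We have
\[
f_a(z)-F_0(z)=-t+\left\lfloor\frac{s-t}{2}\right\rfloor-\left\lfloor\frac{s}{2}\right\rfloor .
\]
For real $x,y$ one always has $\lfloor x-y\rfloor-\lfloor x\rfloor\in\{\lfloor -y\rfloor,\lfloor -y\rfloor+1\}$. Applying this with $x=s/2$ and $y=t/2$ gives
\[
f_a(z)-F_0(z)\in\{c,\,c+1\},\qquad c=-t+\left\lfloor -\frac t2\right\rfloor ,
\]
with $c$ independent of $z$. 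So as $z$ ranges over $\mathbb Z^n$, the difference takes at most two consecutive integer values.

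It then remains to choose $k\in\mathbb Z$ with $|c+5k|\le 2$. This is possible because every integer lies within distance $2$ of some multiple of $5$; take $k$ to be the nearest integer to $-c/5$. Then for every $z$,
\[
|f_a(z)+5k-F_0(z)|\le \max(|c+5k|,|c+5k+1|)\le 3,
\]
which is the claimed bound.

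There is no real obstacle here. The only point needing care is the floor inequality. Write $s/2=m+\alpha$ and $t/2=\ell+\beta$ with $\alpha,\beta\in\{0,\tfrac12\}$. Then $\lfloor (s-t)/2\rfloor-\lfloor s/2\rfloor=-\ell+\lfloor\alpha-\beta\rfloor$, and $\lfloor \alpha-\beta\rfloor\in\{-1,0\}$. This is consistent with the form $c$ or $c+1$ once parity cases are accounted for; checking the four parity combinations of $(s,t)$ explicitly is a harmless alternative. Note that the bound $3$ is slack: the argument actually gives $|f_a(z)+5k-F_0(z)|\le 3$ with the nearest choice of $k$, and even $\le 2$ for a suitable $k$ when the two values are $c=-2,-1$ or similar. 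We only need the stated constant.
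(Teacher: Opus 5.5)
Your proof is correct and is essentially the paper's argument: the paper also observes that $f_a(z)-F_0(z)$ depends only on the parity of $\sigma(z)$, so it takes at most two adjacent integer values, and then shifts by a multiple of $5$ into $[-3,3]$; your floor inequality simply makes the two values $c,c+1$ explicit, with $c=-\sigma(a)+\lfloor -\sigma(a)/2\rfloor$. One aside: drop the closing claim that the constant $3$ is slack, because it is sharp in general (for $\sigma(a)=5$ the two values are $-8,-7$, and no multiple of $5$ moves both into $[-2,2]$), though this does not affect the proof.
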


\begin{proof}
The value
\[
f_a(z)-F_0(z)
=
-\sigma(a)+\left\lfloor\frac{\sigma(z)-\sigma(a)}{2}\right\rfloor
-\left\lfloor\frac{\sigma(z)}2\right\rfloor
\]
depends only on \(\sigma(z)\pmod 2\). Hence, it takes either one value or two
adjacent integer values.  By adding a suitable multiple of \(5\), these one or
two adjacent values can be moved into the interval \([-3,3]\).
\end{proof}

\begin{lemma}[Separated profile extension]\label{lem:separated-extension}
Let $Q\subseteq\Z^n$ be finite, and let
\[
S=S_1\sqcup S_2\sqcup\cdots\sqcup S_m\subseteq Q.
\]
Assume that the $\ell^1$-distance between distinct $S_j$ is at least
$100$. Suppose that $f:S\to\Z$ has the following properties:
\begin{enumerate}[label=\textup{(\roman*)}]
\item on each $S_j$,
      \[
      f(z)=C_j+\left\lfloor\frac{\sigma(z)-A_j}{2}\right\rfloor
      \]
      for some integers $A_j,C_j$;
\item $|f(z)-F_0(z)|\leq3$ for every $z\in S$.
\end{enumerate}
Then $f$ extends to a function $F:Q\to\Z$ satisfying
\[
F(z+e_i)-F(z)\in\{0,1\}
\]
for every positive coordinate edge contained in $Q$.
\end{lemma}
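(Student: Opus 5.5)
The plan is to reduce everything to the extension criterion, Lemma \ref{lem:criterion}. That lemma assumes every connected component of $Q$ meets $S$, which is not given here. So I first split $Q$ into its connected components. On a component $Q'$ with $Q'\cap S=\varnothing$ I simply put $F=F_0|_{Q'}$. Since $F_0(z+e_i)-F_0(z)=\lfloor(\sigma(z)+1)/2\rfloor-\lfloor\sigma(z)/2\rfloor\in\{0,1\}$, this is admissible there. Positive coordinate edges never join two different components, so the pieces can be glued. It then suffices to treat the union $Q^\ast$ of the components meeting $S$. For these I verify
\[
f(v)-f(u)\le \delta_{Q^\ast}(u,v)\qquad (u,v\in S).
\]
The only lower bound on $\delta$ I will use is the comparison with the full lattice. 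Every admissible undirected path in $Q^\ast$ is also a path in $\mathbb Z^n$, and the quasi-metric is a minimum over paths, so
\[
\delta_{Q^\ast}(u,v)\ \ge\ \delta_{\mathbb Z^n}(u,v)=\sum_i\max(v_i-u_i,0)=:P(u,v).
\]
(If $u,v$ lie in different components, then $\delta=\infty$ and there is nothing to check.) Write $N=\sum_i\max(u_i-v_i,0)$ and $d=\sigma(v)-\sigma(u)=P-N$. Note that $P+N=\|v-u\|_1$ and $P\ge\max(d,0)$.

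\emph{Case 1: $u,v$ in the same piece $S_j$.} Property (i) gives
\[
f(v)-f(u)=\left\lfloor\frac{\sigma(u)-A_j+d}{2}\right\rfloor-\left\lfloor\frac{\sigma(u)-A_j}{2}\right\rfloor\le\left\lceil\frac d2\right\rceil .
\]
If $d\le 0$, then $\lceil d/2\rceil\le 0\le P$. If $d\ge1$, then $\lceil d/2\rceil\le d\le P$. Either way $f(v)-f(u)\le P\le\delta$.

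\emph{Case 2: $u\in S_j$, $v\in S_k$ with $j\ne k$.} Here I use property (ii) to compare with the model height:
\[
f(v)-f(u)\le F_0(v)-F_0(u)+6\le\left\lceil\frac d2\right\rceil+6\le\frac{P-N+1}{2}+6 .
\]
This is at most $P$ exactly when $P+N\ge 13$. That holds because $P+N=\|v-u\|_1\ge100$ by the separation hypothesis. Then Lemma \ref{lem:criterion} produces the extension $F$ on $Q^\ast$, and together with the $F_0$-pieces this gives $F$ on all of $Q$.

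I expect the main point needing care to be the reduction in the first paragraph. One has to make sure that applying Lemma \ref{lem:criterion} on $Q^\ast$ is legitimate: $\delta_{Q^\ast}$ agrees with $\delta_Q$ on pairs from $S$, and components without points of $S$ are handled separately. Once that is in place, the two inequalities above are short computations. The separation constant $100$ is far more than needed; any $\ell^1$ gap of at least $13$ absorbs the phase error $6$ coming from (ii).
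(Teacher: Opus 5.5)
Your proposal is correct and follows essentially the same route as the paper: reduce to Lemma \ref{lem:criterion}, bound $\delta_Q(u,v)$ below by $P=\delta_{\mathbb Z^n}(u,v)$, use (i) for pairs in the same piece and (ii) plus the separation hypothesis for pairs in different pieces, and fill in components of $Q$ that miss $S$ with $F_0$. Your estimate $\lceil d/2\rceil$ in place of the paper's $d/2+1$ only sharpens the needed gap from $14$ to $13$. Your explicit restriction to the components meeting $S$ spells out what the paper states in its final sentence.
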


\begin{proof}
By Lemma~\ref{lem:criterion}, it suffices to check
\[
f(v)-f(u)\leq \delta_Q(u,v)
\]
for each ordered pair \((u,v)\in S^2\) in the same connected component of \(Q\).

First suppose \(u,v\in S_j\).  On \(S_j\), the function \(f\) is of form
\[
z\mapsto C_j+\left\lfloor\frac{\sigma(z)-A_j}{2}\right\rfloor,
\]
so,
\[
f(v)-f(u)\leq \delta_{\mathbb Z^n}(u,v)\leq \delta_Q(u,v).
\]

Now suppose \(u\in S_j\), \(v\in S_\ell\), with \(j\neq \ell\).  Put
\[
P=\sum_{r=1}^n \max(v_r-u_r,0),
\qquad
N=\sum_{r=1}^n \max(u_r-v_r,0).
\]
Then
\[
\|u-v\|_1=P+N\geq 100,
\qquad
\sigma(v)-\sigma(u)=P-N.
\]
Since $|f(z)-F_0(z)|\leq3$ for every $z\in S$, we have
\[
f(v)-f(u)
\leq F_0(v)-F_0(u)+6.
\]
And
\[
F_0(v)-F_0(u)
=
\left\lfloor\frac{\sigma(v)}2\right\rfloor
-
\left\lfloor\frac{\sigma(u)}2\right\rfloor
\leq \frac{P-N}{2}+1.
\]
Hence
\[
f(v)-f(u)\leq \frac{P-N}{2}+7.
\]
Since \(P+N\geq 100\), we have
\[
\frac{P-N}{2}+7\leq P.
\]
Finally,
\[
P=\delta_{\mathbb Z^n}(u,v)\leq \delta_Q(u,v).
\]
Thus the extension criterion applies.

Components of \(Q\) not meeting \(S\) can be assigned independently, for instance
by \(F_0\).
\end{proof}

Now we are prepared for the Borel oriented coloring.

\begin{theorem}\label{prop:Borelupper}
    There is a Borel graph homomorphism from the directed Schreier graph $\vec F(2^{\mathbb Z^n})$ to the regular tournament on 5 vertices $P_5$.
\end{theorem}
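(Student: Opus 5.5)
We will build a Borel height function $F:F(2^{\mathbb Z^n})\to\mathbb Z$ with $F(e_i\cdot x)-F(x)\in\{0,1\}$. Here the increment is computed in orbit coordinates, so that $H=\sigma+F$ reduced mod $5$ is a homomorphism to $P_5$. Since $\sigma$ is not defined globally on an orbit, we only ever use differences. The coloring is $c(x)=H(x)\bmod 5$, where $H$ is defined locally in each toast piece relative to a basepoint. All that matters is that along each arc $x\to e_i\cdot x$ the value of $c$ increases by $1$ or $2$ mod $5$. We fix the toast $\cT$ of Theorem~\ref{thm:toast} with $R=100$ and define $c$ by induction on the rank of toast pieces, from innermost outward. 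By condition (ii) every point lies in some piece, and every arc lies in some piece, so coloring every piece consistently colors everything.

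\textbf{Inductive invariant.} After a piece $D$ has been colored, choose a point $a\in D$ Borel-wise, for instance the least point in a Borel linear order of the orbit. Identify $D$ with a finite subset of $\mathbb Z^n$ via $z\mapsto z\cdot a$. The invariant is that $c$ agrees with $H_a\bmod 5$ on a neighborhood of $\partial D$ of width, say, $10$. In the lifted picture this means $F=f_a+5k$ there for some $k$, where $f_a$ is as in Lemma~\ref{lem:phase-lifting}.

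\textbf{Inductive step.} Let $D$ have maximal proper subpieces $E_1,\dots,E_m$, all of which are already colored with boundary profiles of model form. Work in coordinates of $D$ with a chosen basepoint. Let $Q$ be $D$ minus the interiors of the $E_j$, keeping a boundary collar of each $E_j$. Let $S_j$ be the collar of $E_j$, and let $S_0$ be a collar of $\partial D$, where we prescribe the model profile $F_0$ itself. On each $S_j$ the existing color is $H_{a_j}\bmod 5$. In $D$-coordinates this lifts to $\sigma(z)+C_j+\lfloor(\sigma(z)-A_j)/2\rfloor$ plus a multiple of $5$. By Lemma~\ref{lem:phase-lifting} we choose the multiple of $5$ so that $|f-F_0|\le 3$ on $S_j$. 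The collars are pairwise at distance $\ge 100$ by Theorem~\ref{thm:toast}(iii), possibly after shrinking collars slightly. Lemma~\ref{lem:separated-extension} then extends $f$ to $F$ on $Q$ with increments in $\{0,1\}$. We set $c=(\sigma+F)\bmod 5$ on $Q$. On the collars this agrees with the old colors mod $5$, so every arc inside $D$ is colored correctly. The boundary collar of $D$ carries $F_0$, which is $H_b\bmod5$ for the basepoint $b$ of $D$ up to a constant shift, so the invariant is restored. All choices are canonical given Borel selectors, for example by always taking the least $F$ given by the explicit min formula of Lemma~\ref{lem:criterion}. Hence $c$ is Borel. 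The base case, pieces with no subpieces, uses only $S_0$.

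\textbf{Main obstacle.} The delicate point is the compatibility of the boundary condition for $D$ with the later use of $D$ as an $S_j$ inside its parent. We must check that the $F_0$-profile we impose on $\partial D$ is exactly of the form (i) in Lemma~\ref{lem:separated-extension} after changing basepoint, namely $C+\lfloor(\sigma(z)-A)/2\rfloor$. Changing basepoint replaces $\sigma(z)$ by $\sigma(z)-\sigma(a)$, which is exactly the shift absorbed by $A_j,C_j$, so the form is preserved. We must also check that Lemma~\ref{lem:phase-lifting} delivers the $\le3$ bound simultaneously for all points of the collar; it does, since the discrepancy depends only on parity. A secondary care point is handling points of $D$ in components of $Q$ not meeting $S$, which the lemma already covers, and ensuring that $\bigcup\cT$ exhausts each orbit. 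Orbits are infinite and every finite connected set lies in some piece, so each point and each arc is eventually colored and never recolored.
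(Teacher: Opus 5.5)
Your proposal is correct and follows essentially the same route as the paper: a toast with $R=100$, the inductive invariant that each piece is colored by its own model coloring $H_0\bmod 5$ near its boundary, and a shift by a multiple of $5$ via Lemma~\ref{lem:phase-lifting}. Lemma~\ref{lem:separated-extension} then fills in the shell between $\partial D$ and the $\partial E_j$, just as in the paper. The only cosmetic difference is your width-$10$ collars, which are unnecessary (the paper uses the boundaries themselves) and are only guaranteed to be about $80$ apart rather than $100$; either take the collars to be the boundaries, or note that the proof of Lemma~\ref{lem:separated-extension} only needs separation at least $14$.
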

\begin{proof}
Fix separation $R=100$, and let $\mathcal T$ be a Borel
toast as in Theorem~\ref{thm:toast}. Fix a Borel linear order. For
each piece $D\in\mathcal T$, let $a_D$ be the least point of $D$. Since the action is
free, every $x\in D$ has a unique coordinate
\[
g_D(x)\in\Z^n
\quad\text{such that}\quad
x=g_D(x)\cdot a_D.
\]
Denote
\[
\widetilde D=\{g_D(x):x\in D\}\subseteq\Z^n.
\]
The model coloring associated with $D$ is
\[
p_D(x)
 =\sigma(g_D(x))
  +\left\lfloor\frac{\sigma(g_D(x))}{2}\right\rfloor
  \pmod5.
\]
This is the model coloring $H_0\bmod5$ in coordinates based at $a_D$.

We recursively define, for every $D\in\mathcal T$, a coloring
$
c_D:D\to\Z_5
$
with the following properties:
\begin{enumerate}[label=\textup{(\alph*)}]
\item $c_D$ is a graph homomorphism on the induced subdigraph on $D$;
\item $c_D=p_D$ on $\partial D$, where $\partial D=\{x\in D:\text{dist}(x,F(2^{\mathbb Z^n})\backslash D)=1\}$ is the boundary of $D$;
\item if $E\subsetneq D$ is a maximal proper toast subpiece, then
      $c_D|_E=c_E$.
\end{enumerate}

Let $E_1,\dots,E_m$ be its maximal proper subpieces. By induction, each $c_{E_j}$ has already been defined. Work in the
coordinate system based at $a_D$. Denote
\[
\widetilde E_j=\{g_D(x):x\in E_j\}.
\]
We define the shell
\[
S_D
 =\widetilde D\setminus\bigcup_{j=1}^m
   \{g_D(x):x\in E_j^{\circ}\}, \text{ where }E_j^\circ=E_j\backslash \partial E_j\text{ is the interior of } E_j.
\]
And we define the boundaries
\[
B_D
 =\{g_D(x):x\in\partial_rD\}
  \cup\bigcup_{j=1}^m\{g_D(x):x\in\partial_rE_j\}.
\]
Now we define a function $F$ on $B_D$.

First, on the boundary $\partial_rD$, we define
\[
F(z)=F_0(z)=\left\lfloor\frac{\sigma(z)}2\right\rfloor.
\]
Second, for a subpiece $E_j$, let $h_j\in\Z^n$ be uniquely defined by
\[
a_{E_j}=h_j\cdot a_D.
\]
If $x=z\cdot a_D\in E_j$, then its coordinate relative to $a_{E_j}$ is
$z-h_j$. Let
\[
H_{h_j}(z)
 =\sigma(z-h_j)+\left\lfloor\frac{\sigma(z-h_j)}2\right\rfloor\quad\text{and}\quad
 f_{h_j}(z)=-\sigma(h_j)+\left\lfloor\frac{\sigma(z-h_j)}2\right\rfloor.
\]
By Lemma~\ref{lem:phase-lifting}, choose the least integer $k_j$ such that
$
|f_{h_j}(z)+5k_j-F_0(z)|\leq3
$
for every $z\in\Z^n$. We remark that the choice of $k_j$ is for Borelness.

Then, on the coordinate copy of $\partial_rE_j$, we define
\[
F(z)=f_{h_j}(z)+5k_j.
\]
Adding $5k_j$ does not change the boundary color modulo $5$.

Since the pieces forming $B_D$ are pairwise at
$\ell^1$-distance at least $100$, by lemma~\ref{lem:separated-extension}, $F$ extends to a function on $S_D$ such that
\[
F(z+e_i)-F(z)\in\{0,1\}
\]
for every $z,z+e_i\in S_D$. 

Now we define the coloring on the shell $S_D$
\[
c_D(z\cdot a_D)=\sigma(z)+F(z)\pmod5.
\]
It is a graph homomorphism from the induced subdigraph on $S_D$ to the regular tournament $\Pfive$.

On each interior $E_j^{\circ}$, we define
\[
c_D=c_{E_j}.
\]
On the boundary $\partial E_j$, the coloring definition agrees with $p_{E_j}$,
and property (b) for $c_{E_j}$ gives $c_{E_j}=p_{E_j}$ there. Hence the two
definitions agree on $\partial{E_j}$, and therefore $c_D|_{E_j}=c_{E_j}$. Then all conditions (a)--(c) are satisfied.

We now define the global coloring $c$. For a vertex $x$, let $D_x$ be the
minimal member of $\mathcal T$ containing $x$. Put
\[
c(x)=c_{D_x}(x).
\]
For $D\supseteq D_x$, repeated use of property (c) along the finite chain of
toast pieces between $D_x$ and $D$ gives
\[
c_D|_{D_x}=c_{D_x}.
\]
Thus $c(x)=c_D(x)$ for every toast piece $D$ containing $x$.

Let $x\to e_i\cdot x$ be an arc, there is a piece $D$ containing both $x$ and $e_i\cdot x$, so 
\[
c(e_i\cdot x)-c(x)=c_D(e_i\cdot x)-c_D(x)\in\{1,2\}\pmod5.
\]
Therefore $c$ is a Borel graph homomorphism.
\end{proof}

\begin{proof}[Proof of Theorem~\ref{thm:Borel}]
Combine Theorem~\ref{prop:Borelupper} and Proposition~\ref{prop:Borellower}.
\end{proof}

\section{The graph homomorphism problem}

In \cite[Theorem 4.4.1]{GJKS2025}, Gao, Jackson, Krohne and Seward proved that the set of finite graphs $G$ such that there is a continuous
graph homomorphism from $\Shift$ to $G$ is a $\Sigma^0_1$-complete set. In this section, we show the following.\begin{proposition} The set of finite directed graphs $D$ and the set of finite oriented graphs $O$ that there is a homomorphism from directed Schreier graph to $D$ resp. $O$ are both $\Sigma^0_1$-complete.\end{proposition}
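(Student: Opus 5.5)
The proof has two halves: the sets are $\Sigma^0_1$, and they are $\Sigma^0_1$-hard. Throughout, "homomorphism" means continuous homomorphism from $\Shift$. By the Directed Twelve Tiles Theorem (Theorem \ref{thm:twelve}), such a homomorphism to a finite digraph $D$ exists iff there are $1\le n<p,q$ with $\gcd(p,q)=1$ and a graph homomorphism $\GammaT_{n,p,q}\to D$.

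\emph{Upper bound.} Each $\GammaT_{n,p,q}$ is a finite digraph computable from $(n,p,q)$, and whether a finite digraph maps into $D$ is decidable by exhaustive search. So the set of (codes of) finite digraphs $D$ admitting a homomorphism is the projection over $(n,p,q)$ of a recursive relation, hence $\Sigma^0_1$. The same formula, restricted to the recursive set of oriented graphs, shows the oriented version is $\Sigma^0_1$.

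\emph{Hardness for digraphs.} I would reduce from the undirected problem, which is $\Sigma^0_1$-complete by \cite[Theorem 4.4.1]{GJKS2025}. Given a finite undirected graph $G$, let $\vec G$ be the symmetric digraph with both arcs $u\to v$ and $v\to u$ for every edge $uv$. Then $\Gamma_{n,p,q}\to G$ iff $\GammaT_{n,p,q}\to\vec G$, since every oriented edge of $\GammaT_{n,p,q}$ must land on an edge of $G$, in either direction. The map $G\mapsto\vec G$ is computable, so by Theorem \ref{thm:twelve} applied to both versions, this is a many-one reduction.

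\emph{Hardness for oriented graphs.} This is the main obstacle: we must simulate a symmetric edge using oriented gadgets while preserving homomorphism existence in both directions. My first attempt would replace each vertex $v$ of $G$ by a small oriented gadget, for instance a directed $3$-cycle $v_0\to v_1\to v_2\to v_0$, and encode the edge $uv$ by arcs $u_i\to v_{i+1}$ and $v_i\to u_{i+1}$ (indices mod $3$). This mirrors the Remark after Proposition \ref{prop:long}, where the $\mathbb Z_3$-lift of $K_3$ is the circulant digraph with $j-i\in\{1,2\}$.

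For the forward direction, I would compose a homomorphism $\Gamma\to G$ with the model $3$-coloring $(i,j)\mapsto i+j \bmod 3$ of the grid; this is well defined on $\GammaT_{n,p,q}$ only when $3\mid p,q$, which is incompatible with coprimality. So I expect to pass through the continuous maps directly instead: given a continuous homomorphism $c:\Shift\to G$, I would build $x\mapsto(c(x),\phi(x))$, where $\phi$ is a continuous "phase" that increases by $1$ along every arc. No such continuous $\phi$ mod $3$ exists globally. The fix I would try is to let the gadget absorb phase defects, using larger vertex gadgets (the $\mathbb Z_m$-lift with arcs of increment $1$ or $2$). The increment-$2$ arcs then play the role of arcs of energy $0$ that allow phase resynchronization, as in the $P_5$ construction of Section 6.

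For the converse, a homomorphism into the gadget graph projects to $G$ by forgetting the phase, and arcs between distinct gadgets project to edges of $G$. Intra-gadget arcs would collapse an edge to a vertex, so I would make each gadget a tournament whose internal arcs cannot be used consistently by $\GammaT_{n,p,q}$, for example via an admissible energy as in Proposition \ref{prop:long}. Verifying that the phase-defect mechanism and this projection argument work together is the step I expect to require the real effort.
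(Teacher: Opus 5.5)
Your $\Sigma^0_1$ upper bound and the symmetric-digraph reduction for digraphs are correct and agree with the paper. The gap is the oriented hardness. It is not merely unfinished: the gadget you propose cannot work.

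\textbf{Why the gadget fails.} In your $\mathbb Z_m$-lift every arc changes the phase coordinate by $1$ or $2$. So projecting to the phase is a homomorphism onto the circulant $C$ on $\mathbb Z_m$ with $j-i\equiv 1,2 \pmod m$. By the Remark after Proposition \ref{prop:long}, $C$ carries an admissible energy for every $m\ge 3$. Hence no $\GammaT_{n,p,q}$ maps into it, and by Theorem \ref{thm:twelve} there is no continuous homomorphism $\Shift\to C$. A continuous homomorphism from $\Shift$ into your gadget graph, composed with the phase projection, would give one into $C$. So your construction produces a NO instance for every $G$, and the forward direction fails outright. You already saw this for $m=3$, where the projection lands in $\vec C_3$; larger $m$ does not help.

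Your appeal to Section 6 conflates Borel with continuous. $P_5$ is exactly this circulant for $m=5$, and the toast construction there yields only a Borel map, which cannot be continuous. The intra-gadget arcs also leave the converse direction unresolved.

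\textbf{The missing idea.} You do not need to build the phase by hand; import it instead. Fix once and for all a finite oriented graph $O$ together with a continuous homomorphism $d\colon \Shift\to O$, for instance $R_7$ from Proposition \ref{prop:upper}. Send $G$ to the categorical product $O_G=G\otimes O$, with $(u,a)\to(v,b)$ iff $uv\in E(G)$ and $a\to b$ in $O$.
\begin{itemize}
\item $O_G$ is oriented because $O$ is, and it is computable from $G$ because $O$ is fixed.
\item Forward direction: if $c$ is a continuous homomorphism from $\Shift$ to $G$ (arcs to edges), then $x\mapsto (c(x),d(x))$ is a continuous homomorphism into $O_G$.
\item Converse direction: the first-coordinate projection sends every arc of $O_G$ to an edge of $G$. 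So composing it with any continuous homomorphism into $O_G$ gives one into $G$.
\end{itemize}
Since $G$ has no loops, $O_G$ has no arcs inside a fiber $\{u\}\times V(O)$. The intra-gadget difficulty you anticipated therefore never arises. This is the paper's reduction.
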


\begin{proof}
    Put$$\mathcal{G}=\{G:G\text{ is a finite graph and there is a continuous graph homomorphism }\Shift\to G\},$$$$\mathcal{D}=\{D:D\text{ is a finite digraph and there is a continuous graph homomorphism }\Shift\to D\},$$$$\mathcal{O}=\{O:O\text{ is a finite oriented graph and there is a continuous graph homomorphism }\Shift\to O\}.$$

    By Theorem \ref{thm:twelve}, we have that both $\mathcal{D}$ and $\mathcal{O}$ are $\Sigma^0_1$. In \cite[Theorem 4.4.1]{GJKS2025}, it was known that $\mathcal{G}$ is a $\Sigma^0_1$-complete set. We show that there is a computable reduction from $\mathcal G$ to $\mathcal{D}$ and $\mathcal{O}$.

    For a graph $G$, let $D_G$ be the symmetric digraph of $G$, that is, an edge is two arcs in both direction. It is easy to see that the map $G\to D_G$ is a reduction from $\mathcal{G}$ to $\mathcal{D}$.

    For a graph $G$, we fix an oriented graph $O$ such that there is a continuous graph homomorphism $\Shift\to O$, let $O_G=G\otimes O$, $V(O_G)=V(G)\times V(O)$ and $$(u,a)\to (v,b)$$iff$$(u,v)\in E(G)\text{ and }(a,b)\in A(O).$$
    $O$ is an oriented graph then $O_G$ is an oriented graph, indeed, assume that both $(u,a)\to(v,b)$ and $(v,b)\to(u,a)$ then $a\to b$ and $b\to a$, a contradiction. It is routine to verify that the map $G\to O_G$ is a reduction from $\mathcal{G}$ to $\mathcal{O}$.
\end{proof}
\section{Future work}\subsection{The graph homomorphism problem}

Put$$\mathcal{T}=\{T:T\text{ is a finite tournament and there is a continuous graph homomorphism }\Shift\to T\}.$$ We do not know if it is $\Sigma^0_1$-hard or not.

One can extend energy functions to integer-valued or abelian group-valued. For example, we can define an energy function valued $\mathbb Z^3$ in the Paley tournament of 7 vertices $\text{PT}(7)$. $V(\text{PT}(7))=\mathbb Z_7$, $(i,j)\in A(\text{PT}(7))$ iff
$$j-i\equiv 1,2,4(\bmod 7).$$ We define $\eta:A(\text{PT}(7))\to\mathbb Z^3$ to be
$$\eta(i,j)=
 \begin{cases}
 (1,0,0),&j-i\equiv 1(\bmod 7),\\
 (0,1,0),&j-i\equiv 2(\bmod 7),\\
 (0,0,1),&j-i\equiv 4(\bmod 7).
 \end{cases}$$
 One can verify that the diamonds start with $0$, $(a,b,c,d)\in\{(0,1,2,3),(0,1,4,5),(0,2,4,6)\}$ satisfy $\eta(ab)+\eta(bd)=\eta(ac)+\eta(cd)$, since the graph is vertex-transitive, the energy function is diamond-compatible. And three energy subdigraphs are $\vec C_7$, so they are coprime-free. A similar proof of \ref{prop:long} shows that there is no continuous graph homomorphism from the directed Schreier graph $\Shift$ to the Paley tournament of 7 vertices $\text{PT}(7)$.
\subsection{Borel and continuous oriented chromatic number}

We do not know the Borel or continuous oriented chromatic numbers of other group shift actions, for example, $7\leq \chi_{Bo}(\vec F(2^{\mathbb F_2}))<\infty$ by \cite{Marks} and $7\leq\chi_{co}(\vec F(2^{\mathbb Z^n}))<\infty,n>2$.

One can show that a Borel oriented graph of bounded degree has finite Borel oriented chromatic number. Indeed, let $G$ be the underlying graph, then $G^2$ has bounded degree where $V(G^2)=V(G)$ and $(x,y)\in E(G^2)\Longleftrightarrow 1\leq d_G(x,y)\leq 2$. By \cite{KST1999}, we have a finite proper coloring of $G^2$. For each vertex $x$, we define
$$S^+(x)=\{c(y):x\to y\},\quad S^-(x)=\{c(y):y\to x\}.$$
Since all neighbors of $x$ have different colors, we have $S^+(x)\cap S^-(x)=\emptyset$. Now we define an oriented graph $H$ with $V(H)=\{(c(x),S^+(x),S^-(x))\}$ and
$$(c(x),S^+(x),S^-(x))\to (c(y),S^+(y),S^-(y))\Longleftrightarrow c(y)\in S^+(x)\text{ and }c(x)\in S^-(y).$$
It is routine to verify that $H$ is a finite oriented graph and $t(x)=(c(x),S^+(x),S^-(x))$ is a Borel oriented coloring.
\begin{acknowledgments}
    The author would like to thank Edward Krohne for consulting on computer programming. The author would like to thank Jiangdong Ai for consulting on graph theory.
\end{acknowledgments}
\appendix

\section{Certificates of interval criterion for 27 tournaments}\label{app:orders}

\begin{multicols}{2}
\raggedcolumns
\setlength{\columnsep}{14pt}
\begin{appcerttable}
\captionof{table}{$T_{1}$: ordered pairs with $\lvert I(a,d)\rvert\ge 2$}
\label{tab:I-T1}
\begin{tabular}{c|c|c}
$(a,d)$ & $I(a,d)$ & position \\ \hline
$(0,3)$ & $\{1,2\}$ & inside \\
$(0,4)$ & $\{1,2,3\}$ & inside \\
$(0,5)$ & $\{1,2,3,4\}$ & inside \\
$(1,4)$ & $\{2,3\}$ & inside \\
$(1,5)$ & $\{2,3,4\}$ & inside \\
$(2,5)$ & $\{3,4\}$ & inside \\
\end{tabular}
\end{appcerttable}

\begin{appcerttable}
\captionof{table}{$T_{2}$: ordered pairs with $\lvert I(a,d)\rvert\ge 2$}
\label{tab:I-T2}
\begin{tabular}{c|c|c}
$(a,d)$ & $I(a,d)$ & position \\ \hline
$(0,3)$ & $\{1,2\}$ & inside \\
$(0,4)$ & $\{1,2,3\}$ & inside \\
$(0,5)$ & $\{1,3,4\}$ & inside \\
$(1,4)$ & $\{2,3\}$ & inside \\
$(1,5),(2,5)$ & $\{3,4\}$ & inside \\
$(5,3),(5,4)$ & $\{0,2\}$ & outside \\
\end{tabular}
\end{appcerttable}

\begin{appcerttable}
\captionof{table}{$T_{3}$: ordered pairs with $\lvert I(a,d)\rvert\ge 2$}
\label{tab:I-T3}
\begin{tabular}{c|c|c}
$(a,d)$ & $I(a,d)$ & position \\ \hline
$(0,3)$ & $\{1,2\}$ & inside \\
$(0,4)$ & $\{1,2,3\}$ & inside \\
$(0,5)$ & $\{2,3,4\}$ & inside \\
$(1,4)$ & $\{2,3\}$ & inside \\
$(1,5)$ & $\{2,3,4\}$ & inside \\
$(2,5)$ & $\{3,4\}$ & inside \\
$(5,2),(5,3),(5,4)$ & $\{0,1\}$ & outside \\
\end{tabular}
\end{appcerttable}

\begin{appcerttable}
\captionof{table}{$T_{4}$: ordered pairs with $\lvert I(a,d)\rvert\ge 2$}
\label{tab:I-T4}
\begin{tabular}{c|c|c}
$(a,d)$ & $I(a,d)$ & position \\ \hline
$(0,3)$ & $\{1,2\}$ & inside \\
$(0,4)$ & $\{1,2,3\}$ & inside \\
$(0,5)$ & $\{2,4\}$ & inside \\
$(1,4)$ & $\{2,3\}$ & inside \\
$(1,5)$ & $\{2,4\}$ & inside \\
$(5,2),(5,3)$ & $\{0,1\}$ & outside \\
$(5,4)$ & $\{0,1,3\}$ & outside \\
\end{tabular}
\end{appcerttable}

\begin{appcerttable}
\captionof{table}{$T_{5}$: ordered pairs with $\lvert I(a,d)\rvert\ge 2$}
\label{tab:I-T5}
\begin{tabular}{c|c|c}
$(a,d)$ & $I(a,d)$ & position \\ \hline
$(0,3)$ & $\{1,2\}$ & inside \\
$(0,4)$ & $\{1,2,3\}$ & inside \\
$(0,5)$ & $\{3,4\}$ & inside \\
$(1,4)$ & $\{2,3\}$ & inside \\
$(1,5),(2,5)$ & $\{3,4\}$ & inside \\
$(5,2)$ & $\{0,1\}$ & outside \\
$(5,3),(5,4)$ & $\{0,1,2\}$ & outside \\
\end{tabular}
\end{appcerttable}

\begin{appcerttable}
\captionof{table}{$T_{6}$: ordered pairs with $\lvert I(a,d)\rvert\ge 2$}
\label{tab:I-T6}
\begin{tabular}{c|c|c}
$(a,d)$ & $I(a,d)$ & position \\ \hline
$(0,1)$ & $\{2,4\}$ & outside \\
$(0,3)$ & $\{1,2\}$ & inside \\
$(0,4)$ & $\{2,3\}$ & inside \\
$(0,5)$ & $\{2,3,4\}$ & inside \\
$(2,1)$ & $\{4,5\}$ & outside \\
$(2,5)$ & $\{3,4\}$ & inside \\
$(3,1)$ & $\{4,5\}$ & outside \\
$(5,3)$ & $\{0,1\}$ & outside \\
\end{tabular}
\end{appcerttable}

\begin{appcerttable}
\captionof{table}{$T_{7}$: ordered pairs with $\lvert I(a,d)\rvert\ge 2$}
\label{tab:I-T7}
\begin{tabular}{c|c|c}
$(a,d)$ & $I(a,d)$ & position \\ \hline
$(0,1)$ & $\{2,5\}$ & outside \\
$(0,3)$ & $\{1,2\}$ & inside \\
$(2,0)$ & $\{3,4\}$ & outside \\
$(2,1)$ & $\{4,5\}$ & outside \\
$(2,3)$ & $\{1,4\}$ & outside \\
$(2,5)$ & $\{3,4\}$ & inside \\
$(3,1),(4,1)$ & $\{0,5\}$ & outside \\
$(4,5)$ & $\{0,3\}$ & outside \\
\end{tabular}
\end{appcerttable}

\begin{appcerttable}
\captionof{table}{$T_{8}$: ordered pairs with $\lvert I(a,d)\rvert\ge 2$}
\label{tab:I-T8}
\begin{tabular}{c|c|c}
$(a,d)$ & $I(a,d)$ & position \\ \hline
$(0,3)$ & $\{1,2\}$ & inside \\
$(0,4)$ & $\{2,3\}$ & inside \\
$(0,5)$ & $\{1,2,3,4\}$ & inside \\
$(1,4),(1,5)$ & $\{2,3\}$ & inside \\
$(2,5)$ & $\{3,4\}$ & inside \\
\end{tabular}
\end{appcerttable}

\begin{appcerttable}
\captionof{table}{$T_{9}$: ordered pairs with $\lvert I(a,d)\rvert\ge 2$}
\label{tab:I-T9}
\begin{tabular}{c|c|c}
$(a,d)$ & $I(a,d)$ & position \\ \hline
$(0,3)$ & $\{1,2\}$ & inside \\
$(2,0)$ & $\{3,4,5\}$ & outside \\
$(2,1)$ & $\{4,5\}$ & outside \\
$(2,5)$ & $\{3,4\}$ & inside \\
$(3,0)$ & $\{4,5\}$ & outside \\
$(3,1)$ & $\{0,4,5\}$ & outside \\
$(4,1)$ & $\{0,5\}$ & outside \\
\end{tabular}
\end{appcerttable}

\begin{appcerttable}
\captionof{table}{$T_{10}$: ordered pairs with $\lvert I(a,d)\rvert\ge 2$}
\label{tab:I-T10}
\begin{tabular}{c|c|c}
$(a,d)$ & $I(a,d)$ & position \\ \hline
$(0,3)$ & $\{1,2\}$ & inside \\
$(2,0)$ & $\{3,4,5\}$ & outside \\
$(2,1)$ & $\{4,5\}$ & outside \\
$(2,3)$ & $\{1,5\}$ & outside \\
$(3,1)$ & $\{0,4\}$ & outside \\
$(4,1)$ & $\{0,5\}$ & outside \\
$(4,3)$ & $\{1,5\}$ & outside \\
\end{tabular}
\end{appcerttable}

\begin{appcerttable}
\captionof{table}{$T_{11}$: ordered pairs with $\lvert I(a,d)\rvert\ge 2$}
\label{tab:I-T11}
\begin{tabular}{c|c|c}
$(a,d)$ & $I(a,d)$ & position \\ \hline
$(0,3)$ & $\{1,2\}$ & inside \\
$(0,4)$ & $\{2,3\}$ & inside \\
$(2,0),(2,1)$ & $\{4,5\}$ & outside \\
$(2,3)$ & $\{1,5\}$ & outside \\
$(4,1)$ & $\{0,5\}$ & outside \\
$(4,3)$ & $\{0,1,5\}$ & outside \\
$(5,3)$ & $\{0,1\}$ & outside \\
\end{tabular}
\end{appcerttable}

\begin{appcerttable}
\captionof{table}{$T_{12}$: ordered pairs with $\lvert I(a,d)\rvert\ge 2$}
\label{tab:I-T12}
\begin{tabular}{c|c|c}
$(a,d)$ & $I(a,d)$ & position \\ \hline
$(0,3)$ & $\{1,2\}$ & inside \\
$(0,4)$ & $\{2,3\}$ & inside \\
$(0,5)$ & $\{1,3,4\}$ & inside \\
$(1,4)$ & $\{2,3\}$ & inside \\
$(2,5)$ & $\{3,4\}$ & inside \\
$(4,2)$ & $\{1,5\}$ & outside \\
$(5,3),(5,4)$ & $\{0,2\}$ & outside \\
\end{tabular}
\end{appcerttable}

\begin{appcerttable}
\captionof{table}{$T_{13}$: ordered pairs with $\lvert I(a,d)\rvert\ge 2$}
\label{tab:I-T13}
\begin{tabular}{c|c|c}
$(a,d)$ & $I(a,d)$ & position \\ \hline
$(0,1)$ & $\{2,4\}$ & outside \\
$(0,3)$ & $\{1,2\}$ & inside \\
$(0,5)$ & $\{2,4\}$ & inside \\
$(2,0)$ & $\{3,5\}$ & outside \\
$(2,1)$ & $\{4,5\}$ & outside \\
$(2,5)$ & $\{3,4\}$ & inside \\
$(3,1)$ & $\{0,4,5\}$ & outside \\
\end{tabular}
\end{appcerttable}

\begin{appcerttable}
\captionof{table}{$T_{14}$: ordered pairs with $\lvert I(a,d)\rvert\ge 2$}
\label{tab:I-T14}
\begin{tabular}{c|c|c}
$(a,d)$ & $I(a,d)$ & position \\ \hline
$(0,3)$ & $\{1,2\}$ & inside \\
$(0,4)$ & $\{2,3\}$ & inside \\
$(0,5)$ & $\{2,3,4\}$ & inside \\
$(1,4),(1,5)$ & $\{2,3\}$ & inside \\
$(2,1)$ & $\{4,5\}$ & outside \\
$(2,5)$ & $\{3,4\}$ & inside \\
$(3,1)$ & $\{4,5\}$ & outside \\
$(5,2),(5,3)$ & $\{0,1\}$ & outside \\
\end{tabular}
\end{appcerttable}

\begin{appcerttable}
\captionof{table}{$T_{15}$: ordered pairs with $\lvert I(a,d)\rvert\ge 2$}
\label{tab:I-T15}
\begin{tabular}{c|c|c}
$(a,d)$ & $I(a,d)$ & position \\ \hline
$(0,4),(0,5)$ & $\{2,3\}$ & inside \\
$(1,2)$ & $\{0,3\}$ & outside \\
$(1,4),(1,5)$ & $\{2,3\}$ & inside \\
$(2,0),(3,0)$ & $\{4,5\}$ & outside \\
$(3,4)$ & $\{2,5\}$ & outside \\
$(5,0)$ & $\{1,4\}$ & inside \\
$(5,2),(5,3)$ & $\{0,1\}$ & outside \\
\end{tabular}
\end{appcerttable}

\begin{appcerttable}
\captionof{table}{$T_{16}$: ordered pairs with $\lvert I(a,d)\rvert\ge 2$}
\label{tab:I-T16}
\begin{tabular}{c|c|c}
$(a,d)$ & $I(a,d)$ & position \\ \hline
$(0,3)$ & $\{1,2\}$ & inside \\
$(0,4)$ & $\{2,3\}$ & inside \\
$(0,5)$ & $\{2,4\}$ & inside \\
$(1,4)$ & $\{2,3\}$ & inside \\
$(2,1)$ & $\{4,5\}$ & outside \\
$(4,3)$ & $\{1,5\}$ & outside \\
$(5,2),(5,3)$ & $\{0,1\}$ & outside \\
$(5,4)$ & $\{0,3\}$ & outside \\
\end{tabular}
\end{appcerttable}

\begin{appcerttable}
\captionof{table}{$T_{17}$: ordered pairs with $\lvert I(a,d)\rvert\ge 2$}
\label{tab:I-T17}
\begin{tabular}{c|c|c}
$(a,d)$ & $I(a,d)$ & position \\ \hline
$(0,3)$ & $\{1,2\}$ & inside \\
$(0,4)$ & $\{2,3\}$ & inside \\
$(0,5)$ & $\{3,4\}$ & inside \\
$(1,4)$ & $\{2,3\}$ & inside \\
$(2,5)$ & $\{3,4\}$ & inside \\
$(3,1)$ & $\{4,5\}$ & outside \\
$(4,2)$ & $\{1,5\}$ & outside \\
$(5,2)$ & $\{0,1\}$ & outside \\
$(5,3)$ & $\{0,1,2\}$ & outside \\
$(5,4)$ & $\{0,2\}$ & outside \\
\end{tabular}
\end{appcerttable}

\begin{appcerttable}
\captionof{table}{$T_{18}$: ordered pairs with $\lvert I(a,d)\rvert\ge 2$}
\label{tab:I-T18}
\begin{tabular}{c|c|c}
$(a,d)$ & $I(a,d)$ & position \\ \hline
$(0,4)$ & $\{1,2,3\}$ & inside \\
$(0,5)$ & $\{2,3,4\}$ & inside \\
$(1,5)$ & $\{2,4\}$ & inside \\
$(2,1)$ & $\{3,5\}$ & outside \\
$(2,5)$ & $\{3,4\}$ & inside \\
$(5,2),(5,4)$ & $\{0,1\}$ & outside \\
\end{tabular}
\end{appcerttable}

\begin{appcerttable}
\captionof{table}{$T_{19}$: ordered pairs with $\lvert I(a,d)\rvert\ge 2$}
\label{tab:I-T19}
\begin{tabular}{c|c|c}
$(a,d)$ & $I(a,d)$ & position \\ \hline
$(0,4)$ & $\{1,2,3\}$ & inside \\
$(0,5),(2,5)$ & $\{3,4\}$ & inside \\
$(3,2)$ & $\{1,5\}$ & outside \\
$(5,2)$ & $\{0,1\}$ & outside \\
$(5,3)$ & $\{0,2\}$ & outside \\
$(5,4)$ & $\{0,1,2\}$ & outside \\
\end{tabular}
\end{appcerttable}

\begin{appcerttable}
\captionof{table}{$T_{20}$: ordered pairs with $\lvert I(a,d)\rvert\ge 2$}
\label{tab:I-T20}
\begin{tabular}{c|c|c}
$(a,d)$ & $I(a,d)$ & position \\ \hline
$(0,3)$ & $\{1,2\}$ & inside \\
$(0,4)$ & $\{1,2,3\}$ & inside \\
$(0,5),(1,4)$ & $\{2,3\}$ & inside \\
$(1,5)$ & $\{2,3,4\}$ & inside \\
$(2,0)$ & $\{4,5\}$ & outside \\
$(2,5)$ & $\{3,4\}$ & inside \\
$(3,0)$ & $\{4,5\}$ & outside \\
$(4,1)$ & $\{0,5\}$ & outside \\
$(5,2),(5,3)$ & $\{0,1\}$ & outside \\
\end{tabular}
\end{appcerttable}

\begin{appcerttable}
\captionof{table}{$T_{21}$: ordered pairs with $\lvert I(a,d)\rvert\ge 2$}
\label{tab:I-T21}
\begin{tabular}{c|c|c}
$(a,d)$ & $I(a,d)$ & position \\ \hline
$(0,4),(0,5)$ & $\{2,3\}$ & inside \\
$(2,0)$ & $\{4,5\}$ & outside \\
$(2,1)$ & $\{3,4,5\}$ & outside \\
$(2,5)$ & $\{3,4\}$ & inside \\
$(3,0),(3,1)$ & $\{4,5\}$ & outside \\
$(4,1)$ & $\{0,5\}$ & outside \\
$(4,2),(5,2)$ & $\{0,1\}$ & outside \\
\end{tabular}
\end{appcerttable}

\begin{appcerttable}
\captionof{table}{$T_{22}$: ordered pairs with $\lvert I(a,d)\rvert\ge 2$}
\label{tab:I-T22}
\begin{tabular}{c|c|c}
$(a,d)$ & $I(a,d)$ & position \\ \hline
$(0,3)$ & $\{1,2\}$ & inside \\
$(0,4)$ & $\{1,2,3\}$ & inside \\
$(1,4)$ & $\{2,3\}$ & inside \\
$(1,5)$ & $\{2,4\}$ & inside \\
$(2,0)$ & $\{4,5\}$ & outside \\
$(4,1),(4,3)$ & $\{0,5\}$ & outside \\
$(5,2),(5,3)$ & $\{0,1\}$ & outside \\
$(5,4)$ & $\{1,3\}$ & outside \\
\end{tabular}
\end{appcerttable}

\begin{appcerttable}
\captionof{table}{$T_{23}$: ordered pairs with $\lvert I(a,d)\rvert\ge 2$}
\label{tab:I-T23}
\begin{tabular}{c|c|c}
$(a,d)$ & $I(a,d)$ & position \\ \hline
$(0,3),(0,4)$ & $\{1,2\}$ & inside \\
$(0,5)$ & $\{2,4\}$ & inside \\
$(1,4)$ & $\{2,3\}$ & inside \\
$(1,5)$ & $\{2,4\}$ & inside \\
$(2,0)$ & $\{3,5\}$ & outside \\
$(5,2)$ & $\{0,1\}$ & outside \\
$(5,4)$ & $\{0,1,3\}$ & outside \\
\end{tabular}
\end{appcerttable}

\begin{appcerttable}
\captionof{table}{$T_{24}$: ordered pairs with $\lvert I(a,d)\rvert\ge 2$}
\label{tab:I-T24}
\begin{tabular}{c|c|c}
$(a,d)$ & $I(a,d)$ & position \\ \hline
$(0,3)$ & $\{1,2\}$ & inside \\
$(0,4)$ & $\{1,3\}$ & inside \\
$(0,5)$ & $\{2,3\}$ & inside \\
$(1,5)$ & $\{2,3,4\}$ & inside \\
$(3,0)$ & $\{4,5\}$ & outside \\
$(4,1)$ & $\{0,5\}$ & outside \\
$(4,3)$ & $\{0,2\}$ & outside \\
$(5,2),(5,3)$ & $\{0,1\}$ & outside \\
\end{tabular}
\end{appcerttable}

\begin{appcerttable}
\captionof{table}{$T_{25}$: ordered pairs with $\lvert I(a,d)\rvert\ge 2$}
\label{tab:I-T25}
\begin{tabular}{c|c|c}
$(a,d)$ & $I(a,d)$ & position \\ \hline
$(0,4)$ & $\{1,2,3\}$ & inside \\
$(0,5)$ & $\{2,3\}$ & inside \\
$(1,5)$ & $\{2,4\}$ & inside \\
$(2,0)$ & $\{4,5\}$ & outside \\
$(2,1)$ & $\{3,5\}$ & outside \\
$(2,5)$ & $\{3,4\}$ & inside \\
$(3,0)$ & $\{4,5\}$ & outside \\
$(4,1)$ & $\{0,5\}$ & outside \\
$(5,2)$ & $\{0,1\}$ & outside \\
\end{tabular}
\end{appcerttable}

\begin{appcerttable}
\captionof{table}{$T_{26}$: ordered pairs with $\lvert I(a,d)\rvert\ge 2$}
\label{tab:I-T26}
\begin{tabular}{c|c|c}
$(a,d)$ & $I(a,d)$ & position \\ \hline
$(0,3)$ & $\{1,2\}$ & inside \\
$(0,4),(0,5),(1,4),(1,5)$ & $\{2,3\}$ & inside \\
$(2,0),(2,1)$ & $\{4,5\}$ & outside \\
$(2,5)$ & $\{3,4\}$ & inside \\
$(3,0),(3,1)$ & $\{4,5\}$ & outside \\
$(4,1)$ & $\{0,5\}$ & outside \\
$(4,2),(4,3),(5,2),(5,3)$ & $\{0,1\}$ & outside \\
\end{tabular}
\end{appcerttable}

\begin{appcerttable}
\captionof{table}{$T_{27}$: ordered pairs with $\lvert I(a,d)\rvert\ge 2$}
\label{tab:I-T27}
\begin{tabular}{c|c|c}
$(a,d)$ & $I(a,d)$ & position \\ \hline
$(0,4)$ & $\{1,3\}$ & inside \\
$(0,5)$ & $\{2,3\}$ & inside \\
$(1,5)$ & $\{2,4\}$ & inside \\
$(2,1)$ & $\{3,5\}$ & outside \\
$(3,0)$ & $\{4,5\}$ & outside \\
$(3,2)$ & $\{1,4\}$ & outside \\
$(4,1)$ & $\{0,5\}$ & outside \\
$(4,3)$ & $\{0,2\}$ & outside \\
$(5,2)$ & $\{0,1\}$ & outside \\
\end{tabular}
\end{appcerttable}
\end{multicols}
\clearpage

\section{The seven-color certificate}\label{app:coloring}

\begin{multicols}{2}
\raggedcolumns
\setlength{\columnsep}{14pt}
\footnotesize

\par\noindent\begin{minipage}{\columnwidth}
\begin{Verbatim}[fontsize=\footnotesize,baselinestretch=1.06]
Tca=ac  (4x4)
0 1 2 0
1 2 0 1
2 0 1 2
0 1 2 0
\end{Verbatim}
\end{minipage}\par\addvspace{0.45\baselineskip}

\par\noindent\begin{minipage}{\columnwidth}
\begin{Verbatim}[fontsize=\footnotesize,baselinestretch=1.06]
Tcb=bc  (4x5)
0 1 2 0
3 2 0 3
4 0 3 4
5 3 1 5
0 1 2 0
\end{Verbatim}
\end{minipage}\par\addvspace{0.45\baselineskip}

\par\noindent\begin{minipage}{\columnwidth}
\begin{Verbatim}[fontsize=\footnotesize,baselinestretch=1.06]
Tda=ad  (5x4)
0 3 4 5 0
1 2 0 3 1
2 0 3 1 2
0 3 4 5 0
\end{Verbatim}
\end{minipage}\par\addvspace{0.45\baselineskip}

\par\noindent\begin{minipage}{\columnwidth}
\begin{Verbatim}[fontsize=\footnotesize,baselinestretch=1.06]
Tdb=bd  (5x5)
0 3 4 5 0
3 1 5 0 3
4 5 0 3 4
5 0 3 1 5
0 3 4 5 0
\end{Verbatim}
\end{minipage}\par\addvspace{0.45\baselineskip}

\par\noindent\begin{minipage}{\columnwidth}
\begin{Verbatim}[fontsize=\footnotesize,baselinestretch=1.06]
Tdca=acd  (8x4)
0 3 4 5 0 1 2 0
1 2 0 3 1 5 0 1
2 0 3 4 5 3 1 2
0 1 2 0 3 4 5 0
\end{Verbatim}
\end{minipage}\par\addvspace{0.45\baselineskip}

\par\noindent\begin{minipage}{\columnwidth}
\begin{Verbatim}[fontsize=\footnotesize,baselinestretch=1.06]
Tcba=abc  (4x8)
0 1 2 0
1 2 0 3
2 0 3 4
0 3 1 5
3 4 5 0
4 0 3 1
5 3 1 2
0 1 2 0
\end{Verbatim}
\end{minipage}\par\addvspace{0.45\baselineskip}

\par\noindent\begin{minipage}{\columnwidth}
\begin{Verbatim}[fontsize=\footnotesize,baselinestretch=1.06]
Tcda=adc  (8x4)
0 1 2 0 3 4 5 0
1 2 0 3 4 0 3 1
2 0 3 1 5 3 1 2
0 3 4 5 0 1 2 0
\end{Verbatim}
\end{minipage}\par\addvspace{0.45\baselineskip}

\par\noindent\begin{minipage}{\columnwidth}
\begin{Verbatim}[fontsize=\footnotesize,baselinestretch=1.06]
Tcab=bac  (4x8)
0 1 2 0
3 2 0 1
4 0 3 2
5 3 4 0
0 1 5 3
1 5 3 4
2 0 1 5
0 1 2 0
\end{Verbatim}
\end{minipage}\par\addvspace{0.45\baselineskip}

\par\noindent\begin{minipage}{\columnwidth}
\begin{Verbatim}[fontsize=\footnotesize,baselinestretch=1.06]
Tc^q a=ad^p  (13x4)
0 1 2 0 1 2 0 1 2 0 1 2 0
1 2 0 3 2 4 1 2 6 1 5 0 1
2 0 3 6 4 5 2 6 4 5 3 1 2
0 3 4 5 0 3 4 5 0 3 4 5 0
\end{Verbatim}
\end{minipage}\par\addvspace{0.45\baselineskip}

\par\noindent\begin{minipage}{\columnwidth}
\begin{Verbatim}[fontsize=\footnotesize,baselinestretch=1.06]
Td^p a=ac^q  (13x4)
0 3 4 5 0 3 4 5 0 3 4 5 0
1 2 0 3 6 1 5 0 3 1 5 0 1
2 0 1 2 4 5 2 6 1 2 0 1 2
0 1 2 0 1 2 0 1 2 0 1 2 0
\end{Verbatim}
\end{minipage}\par\addvspace{0.45\baselineskip}

\par\noindent\begin{minipage}{\columnwidth}
\begin{Verbatim}[fontsize=\footnotesize,baselinestretch=1.06]
Tc b^p=a^q c  (4x13)
0 1 2 0
1 2 0 3
2 0 3 4
0 3 6 5
1 2 4 0
2 4 5 3
0 1 2 4
1 2 6 5
2 6 4 0
0 1 5 3
1 5 3 4
2 0 1 5
0 1 2 0
\end{Verbatim}
\end{minipage}\par\addvspace{0.45\baselineskip}

\par\noindent\begin{minipage}{\columnwidth}
\begin{Verbatim}[fontsize=\footnotesize,baselinestretch=1.06]
Tc a^q=b^p c  (4x13)
0 1 2 0
3 2 0 1
4 0 1 2
5 3 2 0
0 6 4 1
3 1 5 2
4 5 2 0
5 0 6 1
0 3 1 2
3 4 5 0
4 0 3 1
5 3 1 2
0 1 2 0
\end{Verbatim}
\end{minipage}\par\addvspace{0.45\baselineskip}
\end{multicols}

\end{document}